\date{\today}
\newcommand{\bbD}{{\mathbb{D}}}
\newcommand{\bbT}{{\mathbb{T}}}
\newcommand{\cP}{{\mathcal{P}}}
\newcommand{\z}{\zeta}
\renewcommand{\Im}{\text{\rm Im}\,}
\allowdisplaybreaks \numberwithin{equation}{section}
\newtheorem{theorem}{Theorem}[section]
\newtheorem{lemma}[theorem]{Lemma}
\newtheorem{proposition}[theorem]{Proposition}
\newtheorem{corollary}[theorem]{Corollary}
\theoremstyle{definition}
\newtheorem{definition}[theorem]{Definition}
\newtheorem{remark}[theorem]{Remark}
\def\be{\begin{equation}}
\def\ee{\end{equation}}
\def\bea{\begin{eqnarray}}
\def\eea{\end{eqnarray}}
\def\bean{\begin{eqnarray*}}
\def\eean{\end{eqnarray*}}
\def\restr#1{\,\vrule\,\lower1ex\hbox{$#1$}}
\def\a{\alpha}
\def\b{\beta}
\def\d{\delta}
\def\D{\Delta}
\def\g{\gamma}
\def\G{\Gamma}
\def\z{\zeta}
\title
{Automorphic Carath\' eodory-Julia Theorem}
\author{Alexander Kheifets}
\begin{document}

\maketitle

\begin{center}
\textit{Dedicated to the memory of my teacher Prof. Victor Emmanuilovich Katsnelson}
\end{center}
\medskip


\begin{abstract}
Let $w(\z)$ be a function analytic on $\bbD$, $|w(\z)|\le 1$. Let $|t_0|=1$.
Assume that $w$ and $w'$ have nontangential boundary values $w_0$ and $w'_0$,
respectively, at $t_0$, $|w_0|=1$. Then (Carath\'eodory - Julia)
$t_0\dfrac{w'_0}{w_0}\ge 0$. The goal of this paper is to obtain a lower bound on
this ratio if $w$ is character-automorphic with respect to a Fuchsian group
(Theorem \ref{230606-07}).

\end{abstract}

\section{Introduction.}
A very particular case of the classical Nevanlinna-Pick problem states that for every point $\z_0$ of the open unit disk $\bbD$ ($|\z_0|<1$) and for every
complex number $w_0$, $|w_0|\le 1$ there exists function $w(\z)$ analytic on $\bbD$, $|w(\z)|\le 1$ such that $w(\z_0)=w_0$.

\medskip\noindent
A character automorphic analogue of this theorem was proved in \cite{Kup-Yud-1997}.
\begin{definition}
Conformal maps of the unit disk $\bbD$ onto itself
are of the form
$$
\zeta\to\frac{a\zeta+b}{\overline b\zeta+\overline a},\quad |a|^2-|b|^2=1.
$$
They form a group under composition.
Discrete subgroups of this group are called {\bf Fuchsian groups}. By $\G^*$ we will
denote the group of unitary characters of $\G$.
\end{definition}
Automorphic analogue of the Nevanlinna-Pick problem reads as follows: given Fuchsian
group $\G$ and a unitary character $\b$ of this group. Given point $\z_0\in\bbD$ and number $w_0$.
Find $\b$ {\bf automorphic} analytic on $\bbD$ function $w(\z)$, that is,
$$
w(\g(\z))=\b(\g)w(\z),\quad\forall \g\in\G,\quad |\z|<1,
$$
such that $|w(\z)|\le 1$ and $w(\z_0)=w_0$.

It was shown in \cite{Kup-Yud-1997} that in this case the constraint on $w_0$
is more restrictive than just $|w_0|\le 1$. However, even before looking for this constraint one needs to know that $\b$-automorphic
bounded analytic functions exist at all.

\medskip\noindent
Widom gave a remarkable characterization of groups $\G$ such that
for every unitary character $\a$ of $\G$ there exists a non-zero
$\a$-automorphic bounded analytic function. We present here one of the equivalent forms
of this theorem.
\begin{theorem}[Widom-Pommerenke \cite{Widom71}, \cite{Pom}]
Assume that group $\G$ is of convergent type, that is the Blaschke product over the orbit of $\G$ converges
$$
g_{\z_0}(\zeta)=\prod\limits_{\gamma\in\Gamma}
\dfrac{\zeta-\gamma(\zeta_0)}{1- \zeta\overline{\gamma(\zeta_0)}}C_\gamma
=\prod\limits_{\gamma\in\Gamma}
\dfrac{\gamma(\zeta)-\zeta_0}{1- \gamma(\zeta)\overline\zeta_0}\widetilde C_\gamma .
$$
(This function is called the Green function of group $\G$). Also assume that $\G$ does not contain elliptic elements.
Then for every unitary character $\a$ of $\G$ there exists a non-zero
$\a$-automorphic bounded analytic function {\bf if and only if} $g'_{\z_0}(\zeta)$ is a function of bounded characteristic.
\end{theorem}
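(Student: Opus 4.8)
The plan is to recast the existence statement spectrally and then read it off from the divisor of $g'_{\z_0}$. First I would restate ``there exists a non-zero $\a$-automorphic bounded analytic function'' as nontriviality of the Hardy class $H^\infty_\a$ of bounded $\a$-automorphic functions, equivalently as nontriviality of the Hilbert space $H^2_\a$ and the strict positivity of its reproducing kernel $k_\a(\z_0,\z_0)$. I would record two structural facts. First, $g_{\z_0}$ is itself $\mu$-automorphic for a character $\mu\in\G^*$: translating $\z\mapsto\gamma\z$ permutes the orbit $\{\gamma'(\z_0)\}$ and preserves unimodular boundary values, so $g_{\z_0}(\gamma\z)=\mu(\gamma)g_{\z_0}(\z)$. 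Second, the zeros of $g'_{\z_0}$ are exactly the critical points of the real Green function $-\log|g_{\z_0}|$ (since $\partial(-\log|g_{\z_0}|)=0\iff g'_{\z_0}=0$ off the orbit of $\z_0$), and this zero divisor is $\G$-invariant.

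Next I would prove the equivalence ``$g'_{\z_0}$ is of bounded characteristic'' $\iff$ the Widom summability $\sum_c(-\log|g_{\z_0}(c)|)<\infty$, the sum taken over the critical points $c$ in a fundamental domain. Differentiating the automorphy relation gives $g'_{\z_0}(\gamma\z)\,\gamma'(\z)=\mu(\gamma)\,g'_{\z_0}(\z)$, so $g'_{\z_0}$ is a section with automorphy factor $\mu(\gamma)/\gamma'(\z)$; stripping this factor, membership in the Nevanlinna class reduces to the Blaschke condition on the critical divisor, which I would identify with convergence of the Blaschke product over the critical points and hence with the stated summability. I expect the first real difficulty here: one must separate the inner (Blaschke) part of $g'_{\z_0}$ from its outer and singular parts and show the latter are harmless, all while carrying the weight-type factor $1/\gamma'$.

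For the ``if'' direction, assume $g'_{\z_0}$ is of bounded characteristic and fix $\a\in\G^*$. I would solve the extremal problem $\sup\{\Re f(\z_0):f\in H^\infty_\a,\ \|f\|_\infty\le1\}$ and show the value is positive, which yields a non-zero $\a$-automorphic function. To produce an admissible nonzero competitor I would symmetrize a seed $\Phi$ built from $g_{\z_0}$ by the character-twisted Poincar\'e series
\[
\Theta_\a(\z)=\sum_{\gamma\in\G}\overline{\a(\gamma)}\,\Phi(\gamma\z)\,\gamma'(\z),
\]
whose (conditional) convergence is exactly underwritten by the summability over critical points, i.e.\ by the bounded characteristic of $g'_{\z_0}$. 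The hard part is convergence of this series for an arbitrary unitary character (there is no absolute convergence in general) together with showing that the sum does not collapse to zero.

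For the ``only if'' direction I would argue by contraposition: if the Widom condition fails I would exhibit a single character $\a$ for which every bounded $\a$-automorphic function is identically zero. The mechanism is that such a function must absorb the entire $\G$-invariant inner divisor supported on the critical points; divergence of the associated product forces an inner factor of infinite total mass, incompatible with boundedness unless $f\equiv0$. Throughout, I expect the crux to be twofold: the precise identification in the second step of ``bounded characteristic'' with critical-point summability, and, in the existence direction, the convergence and non-vanishing of the character-twisted Poincar\'e series uniformly over all $\a\in\G^*$ --- this last point being the genuine heart of Widom's theorem.
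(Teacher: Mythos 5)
The paper does not prove this theorem: it is quoted from Widom and Pommerenke, with only the remark that sufficiency follows ``by straightforward use of Poincar\'e theta series associated with $g'_{\z_0}$'' and that the converse ``is a long and subtle story.'' So your proposal can only be measured against the standard argument, and there it has two concrete problems. First, in the sufficiency direction you locate the role of the Widom condition in the \emph{convergence} of the series $\Theta_\a(\z)=\sum_\gamma\overline{\a(\gamma)}\Phi(\gamma\z)\gamma'(\z)$. That is misplaced: since $1-|\gamma(\z)|^2=|\gamma'(\z)|\,(1-|\z|^2)$, the convergent-type hypothesis already gives \emph{absolute} convergence $\sum_\gamma|\gamma'(\z)|<\infty$, with no character twist needed. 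The real issues are that $\Theta_\a$ transforms as a weight-one form, $\Theta_\a(\delta\z)\delta'(\z)=\a(\delta)\Theta_\a(\z)$, so to obtain a \emph{function} one must divide by an automorphic weight-one form --- the natural choice being $g'_{\z_0}$ --- and that boundedness of the resulting quotient requires a pointwise lower bound of the type $|g'_{\z_0}(\z)|\gtrsim|B(\z)|\sum_\gamma|\gamma'(\z)|\cdot(\cdots)$, with $B$ the inner part. That lower bound is precisely Pommerenke's theorem (the exact analogue of \eqref{230514-01} in this paper for $m'_{t_0}$), and it is where ``$g'_{\z_0}$ of bounded characteristic'' actually enters. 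Your sketch never isolates this lemma, and without it the normalized theta series need not be bounded.

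Second, your necessity argument fails as stated. You claim a bounded $\a$-automorphic function ``must absorb the entire $\G$-invariant inner divisor supported on the critical points'' of $g_{\z_0}$. There is no such constraint: a character-automorphic bounded function has no reason to vanish at the critical points of the Green function, so divergence of the Blaschke sum over critical points does not directly force $f\equiv 0$ for some character. Widom's actual proof of necessity runs through the dual extremal problem for multiple-valued analytic functions and an identity relating $\int_{\G^*}\log m(\a)\,d\a$ to $\sum_c\log|g_{\z_0}(c)|$; it cannot be reduced to the divisor-absorption mechanism you describe. The first structural reductions you make (automorphy of $g_{\z_0}$, identification of the zeros of $g'_{\z_0}$ with the critical points, and the equivalence of bounded characteristic with the critical-point summability) are correct and standard, but the two gaps above are the substance of the theorem.
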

Fuchsian group $\G$ is said to be of {\bf Widom type} if $g'_{\z_0}(\zeta)$ is of bounded characteristic.
If $g'_{\z_0}(\zeta)$ is of bounded characteristic, then,  by straightforward use of
Poincar\'e theta series associated with $g'_{\z_0}$ (see, e.g. \cite{Pom}), one can show that
for every character $\a$ there exist nonconstant $\a$ automorphic bounded analytic functions.
The converse is a long and subtle story (\cite{Widom71}).

Let $H^2(\a)$ be the set of all $\a$ automorphic functions in the Hardy space $H^2$.
For groups of Widom type it is a nonzero closed subspace.
We denote by $k^{\a}_{\z_0}$ the orthogonal projection of $\dfrac{1}{1-t\overline\z_0}$ on $H^2(\a)$.
The following theorem is a special case of what was proved in \cite{Kup-Yud-1997}.
\begin{theorem}[Kupin-Yuditskii]\label{230601-02}
Let $\G$ be a Fuchsian group of Widom type. Let $\b$ be a unitary character of $\G$. Let $|\z_0|< 1$.
There exists a $\b$ automorphic function $w(\z)$ analytic
on $\bbD$, $|w(\z)|\le 1$ such that $w(\z_0)=w_0$ if and only if
\be\label{230601-01}
|w_0|^2\le\underset{\a\in\G^*}{\inf}\dfrac{k^{\b\a}_{\z_0}(\z_0)}{k^{\a}_{\z_0}(\z_0)}.
\ee
\end{theorem}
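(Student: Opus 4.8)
The plan is to read the quantities $k^{\a}_{\z_0}$ as reproducing kernels and to convert the interpolation condition into a statement about contractive multiplication between the spaces $H^2(\a)$. First I would check that $k^{\a}_{\z_0}$ is the reproducing kernel of $H^2(\a)$ at $\z_0$: since $k^{\a}_{\z_0}$ is the projection of the Cauchy--Szeg\H{o} kernel $\tfrac{1}{1-t\overline\z_0}$ onto $H^2(\a)$ and the latter reproduces point values in $H^2$, for every $f\in H^2(\a)$ one gets $\langle f,k^{\a}_{\z_0}\rangle=\langle f,\tfrac{1}{1-t\overline\z_0}\rangle=f(\z_0)$; in particular $k^{\a}_{\z_0}(\z_0)=\|k^{\a}_{\z_0}\|^2>0$ for groups of Widom type, so the ratios in \eqref{230601-01} make sense. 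For the necessity direction I would observe that if $f$ is $\a$-automorphic and $w$ is $\b$-automorphic then $wf$ is $\b\a$-automorphic, and $\|wf\|\le\|w\|_\infty\|f\|\le\|f\|$; hence multiplication by $w$ is a contraction $M_w\colon H^2(\a)\to H^2(\b\a)$ for every $\a\in\G^*$. Testing its adjoint on the kernels, $\langle M_wf,k^{\b\a}_{\z_0}\rangle=(M_wf)(\z_0)=w_0f(\z_0)=\langle f,\overline{w_0}k^{\a}_{\z_0}\rangle$, so $M_w^{\ast}k^{\b\a}_{\z_0}=\overline{w_0}k^{\a}_{\z_0}$, and contractivity forces $|w_0|^2k^{\a}_{\z_0}(\z_0)\le k^{\b\a}_{\z_0}(\z_0)$. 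Taking the infimum over $\a$ yields \eqref{230601-01}.

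For sufficiency I would first reduce to an extremal problem. Writing $\Phi:=\inf_{\a\in\G^*}k^{\b\a}_{\z_0}(\z_0)/k^{\a}_{\z_0}(\z_0)$, note that if some admissible $w$ attains a value of modulus $\sqrt{\Phi}$ at $\z_0$ then, replacing $w$ by $cw$ with $|c|\le1$, every $w_0$ with $|w_0|\le\sqrt{\Phi}$ is attained, and the phase is fixed by a unimodular $c$. Thus it suffices to produce a \emph{single} $\b$-automorphic $w$ with $|w|\le1$ and $|w(\z_0)|=\sqrt{\Phi}$ out of the fiberwise scalar inequalities $|w_0|^2k^{\a}_{\z_0}(\z_0)\le k^{\b\a}_{\z_0}(\z_0)$, one per character; by compactness of the dual group $\G^*$ the infimum is in fact attained, which is convenient for normalizing the extremal function.

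The hard part is exactly this passage from the family of one-dimensional (one point, one character) Pick inequalities to one global automorphic symbol: the fibers $H^2(\a)$ communicate only through the automorphy relation, and each one individually sees nothing beyond a scalar bound. To bridge this I would invoke a commutant-lifting / Nevanlinna--Pick theorem adapted to the character bundle $\{H^2(\a)\}_{\a\in\G^*}$, in the spirit of Abrahamse's interpolation theorem for finitely connected domains. Concretely, one assembles the fibers into the direct integral $\int_{\G^*}^{\oplus}H^2(\a)\,d\a$ over the compact abelian group $\G^*$, recognizes the densely defined assignment $k^{\b\a}_{\z_0}\mapsto\overline{w_0}k^{\a}_{\z_0}$ as a contraction intertwining the natural shift structure on these integrals, and lifts it to a contractive multiplier whose symbol is the sought $\b$-automorphic $w$. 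It is here that the Widom hypothesis is indispensable: boundedness of $g'_{\z_0}$ guarantees both $H^2(\a)\neq\{0\}$ and the convergence of the direct-integral decomposition and of the lifting in the infinitely connected setting, after which the extremal $w$ can be exhibited as a normalized ratio of reproducing kernels along the orbit of $\z_0$.
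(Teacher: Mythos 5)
The paper does not actually prove this theorem: it states it as a special case of a result of Kupin and Yuditskii and simply cites \cite{Kup-Yud-1997}, so there is no internal proof to compare against. Judged on its own terms, your necessity argument is correct and complete: $k^{\a}_{\z_0}$ is indeed the reproducing kernel of $H^2(\a)$ at $\z_0$ (the projection of the Szeg\H{o} kernel reproduces point evaluations on the subspace), multiplication by a contractive $\b$-automorphic $w$ maps $H^2(\a)$ contractively into $H^2(\b\a)$, and the identity $M_w^{\ast}k^{\b\a}_{\z_0}=\overline{w_0}\,k^{\a}_{\z_0}$ together with $\Vert M_w^{\ast}\Vert\le 1$ gives $|w_0|^2k^{\a}_{\z_0}(\z_0)\le k^{\b\a}_{\z_0}(\z_0)$ for each $\a$, hence \eqref{230601-01}.

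The sufficiency half, however, has a genuine gap. The whole difficulty of the theorem is the passage from the family of fiberwise scalar inequalities to a single automorphic symbol, and your proposal handles it by invoking a commutant-lifting / Nevanlinna--Pick theorem adapted to the character bundle --- but no such theorem is stated, let alone proved, and in the infinitely connected Widom setting that lifting is precisely the substance of \cite{Kup-Yud-1997} (the character-automorphic analogues of the Sarason and Nehari theorems). To make this step honest you would at minimum have to specify the intertwining relations that the assignment $k^{\b\a}_{\z_0}\mapsto\overline{w_0}k^{\a}_{\z_0}$ satisfies on the direct integral $\int_{\G^*}^{\oplus}H^2(\a)\,d\a$ and then establish the lifting in that setting; Abrahamse's theorem covers finitely connected domains and does not transfer for free. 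A secondary issue: your claim that the infimum in \eqref{230601-01} is attained ``by compactness of $\G^*$'' presupposes continuity of $\a\mapsto k^{\a}_{\z_0}(\z_0)$, which for Widom groups is a delicate matter (it is tied to the Direct Cauchy Theorem property, discussed later in the paper) and cannot be assumed; fortunately that attainment is not actually needed for the statement.
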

Boundary analogue of the Nevanlinna - Pick theorem is the following classical
\begin{theorem}[Carath\'eodory - Julia]\label{230601-03}
Let $|t_0|=1$. Let $w(\z)$ be function analytic on $\bbD$, $|w(\z)|\le 1$ such that
nontangential boundary values of $w(\z)$ and $w'(\z)$ at point $t_0$ equal $w_0$ and
$w'_0$, respectively; $|w_0|=1$, $w'_0$ is finite. Then
\be\label{230601-04}
t_0\dfrac{w'_0}{w_0}\ge 0.
\ee
Conversely, let $|w_0|=1$ and $w'_0$ be such that \eqref{230601-04} holds.
Then there exists analytic on $\bbD$ function $w(\z)$, $|w(\z)|\le 1$ such that
nontangential boundary values of $w(\z)$ and $w'(\z)$ at point $t_0$ equal $w_0$ and
$w'_0$, respectively.
\end{theorem}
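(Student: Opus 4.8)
The plan is to reduce to a normalized position and then treat the two implications by passing to the associated Carath\'eodory (Herglotz) function. First I would replace $w(\z)$ by $\overline{w_0}\,w(t_0\z)$. This is again analytic with modulus $\le 1$; its nontangential boundary value at $\z=1$ equals $\overline{w_0}w_0=1$, and its derivative at $1$ equals $\overline{w_0}t_0 w'_0=t_0 w'_0/w_0$ (using $\overline{w_0}=w_0^{-1}$). Thus it suffices to prove the theorem when $t_0=1$ and $w_0=1$, where the direct claim \eqref{230601-04} becomes ``$w'_0$ is real and $\ge 0$'' and the converse becomes ``for each $c\ge 0$ there is a Schur function with nontangential values $1$ and $c$ for $w$ and $w'$ at $\z=1$.''

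\textbf{Direct implication.} Set $\phi(\z)=\dfrac{1+w(\z)}{1-w(\z)}$. A short computation gives $\Re\phi=\dfrac{1-|w|^2}{|1-w|^2}\ge 0$, so $\phi$ is a Carath\'eodory function and admits a Herglotz representation $\phi(\z)=ic_0+\int_{\bbT}\dfrac{t+\z}{t-\z}\,d\mu(t)$ with $\mu\ge 0$ and $c_0\in\bbR$. Since $w(\z)\to 1$ nontangentially at $\z=1$, we have $\phi(\z)\to\infty$ there, and the expansion $w(\z)=1+w'_0(\z-1)+o(\z-1)$ yields $\phi(\z)\sim \dfrac{2}{w'_0(1-\z)}$. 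On the other hand, the only mechanism by which the Herglotz integral can blow up like $(1-\z)^{-1}$ under nontangential approach is an atom $a=\mu(\{1\})\ge 0$, whose contribution is $a\dfrac{1+\z}{1-\z}\sim \dfrac{2a}{1-\z}$. Matching the two asymptotics forces $1/w'_0=a\ge 0$; hence $w'_0$ is real and nonnegative, the hypothesis that $w'_0$ is finite corresponding to $a>0$. Undoing the normalization gives \eqref{230601-04}.

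\textbf{Converse.} Given $c\ge 0$, I would exhibit the function explicitly. For $c>0$ put $a=1/c$, $\phi(\z)=a\dfrac{1+\z}{1-\z}$ (a single-atom Carath\'eodory function), and $u(\z)=\dfrac{\phi-1}{\phi+1}=\dfrac{(a-1)+(a+1)\z}{(a+1)+(a-1)\z}$; for $c=0$ put $u\equiv 1$. Then $u$ is a Schur function (a M\"obius automorphism of $\bbD$ when $c>0$, the constant $1$ when $c=0$) with $u(1)=1$ and, by direct differentiation, $u'(1)=c$. The function $w(\z)=w_0\,u(\overline{t_0}\z)$ is then analytic with $|w|\le 1$, has nontangential value $w_0$ at $t_0$, and satisfies $t_0 w'_0/w_0=c$, realizing the prescribed data.

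\textbf{Main obstacle.} The genuine difficulty lies entirely in the direct part: making rigorous the identification of the nontangential limit $w'_0$ with the atom $a=\mu(\{1\})$. This is the content of the Julia--Wolff--Carath\'eodory circle of ideas. Concretely one must (i) control the difference quotient $\dfrac{1-|w(\z)|}{1-|\z|}$ via Julia's lemma (the nested-horocycle/subordination estimate) to see that its boundary $\liminf$ is finite, (ii) use a Lindel\"of-type theorem to upgrade radial limits to nontangential ones and to guarantee that the a priori distinct boundary quantities --- the value $w_0$, the difference quotient, and the nontangential limit of $w'$ --- all coincide, and (iii) justify the asymptotic matching above through estimates on $\int_{\bbT}|t-\z|^{-2}\,d\mu(t)$. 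The sign in \eqref{230601-04} is ultimately nothing but the positivity of the Herglotz measure $\mu$.
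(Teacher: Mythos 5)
Your argument is essentially correct, but note that the paper does not prove this statement at all: Theorem \ref{230601-03} is quoted as classical, and the Appendix (Theorem \ref{ACJ-03sep01}) states the full list of Carath\'eodory--Julia equivalences with a citation to \cite{Car} and no proof. So there is no in-paper argument to match; what you have written is the standard classical route (Cayley transform to a Herglotz function, identification of the angular derivative with the reciprocal of the atom of the Herglotz measure at the boundary point). Your converse half is complete and checkable as written: $u'(\z)=4a\bigl[(a+1)+(a-1)\z\bigr]^{-2}$ is analytic across $\z=1$, so the nontangential limits exist trivially and equal $1$ and $c$.

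Two remarks on the direct half. First, you overstate the difficulty: with the hypotheses as stated (nontangential limits of \emph{both} $w$ and $w'$ are assumed to exist, $|w_0|=1$, $w'_0$ finite), you do not need Julia's lemma or Lindel\"of. The expansion $w(\z)=1+w'_0(\z-1)+o(\z-1)$ follows by integrating $w'$ along segments inside a Stolz angle, and then the atom identification is a two-line dominated convergence argument: on a Stolz angle $|1-\z|\le C_\alpha|t-\z|$ for all $t\in\bbT$, so $(1-\z)\phi(\z)\to 2\mu(\{1\})$ nontangentially; combined with $\dfrac{1-\z}{1-w(\z)}\cdot\dfrac{1-w(\z)}{1-\z}=1$ this gives $w'_0\,\mu(\{1\})=1$, hence $w'_0=1/\mu(\{1\})>0$. (Even more elementarily, expanding $|w(\z)|^2\le 1$ to first order along the rays $\z=1-\e e^{i\theta}$, $|\theta|<\pi/2$, forces $\Re(e^{i\theta}w'_0)\ge 0$ for all such $\theta$, hence $w'_0\ge 0$ directly, with no Herglotz representation at all.) Second, a small gap: you must set aside the case $w\equiv 1$ (after normalization) before forming $\phi=(1+w)/(1-w)$, since by the maximum principle either $w\equiv 1$ (and the claim is trivial) or $w\ne 1$ throughout $\bbD$ and $\phi$ is analytic; this also disposes of the degenerate possibility $w'_0=0$.
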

More details about the Carath\'eodory - Julia theory are given in Appendix. In particular
(see \eqref{230606-01} in Appendix) for functions in Theorem \ref{230601-03} we have that
\be\label{230606-02}
\dfrac{w(t)-w_0}{t -t_0}\in H^2.
\ee
The goal of this paper is to give a character-automorphic version of Theorem \ref{230601-03}
in spirit of Theorem \ref{230601-02}. Basically, the constraint on the given values will be more
restrictive than just positivity in \eqref{230601-04}.
However, even before looking for this constraint one at least needs to know
(in view of \eqref{230606-02}) that there exist
$\b$-automorphic functions of the form
\be\label{230606-03}
1+(\z-t_0)h(\z),\quad h\in H^2.
\ee
A sufficient condition for that is given in Theorem \ref{221231_04} and Corollary \ref{230517-04}.
This result to a large extent is motivated
by Widom's result and by paper \cite{Kh-Yud-2019}. It is stated in terms of the Martin function.
\begin{definition}
Let $\G$ be a Fuchsian group. Let $|t_0|=1$ and assume that
\be\label{BK-2019-11-03-03}
\sum\limits_{\gamma\in\Gamma}|\gamma'(t_0)|<\infty.
\ee
We define Martin function of $t_0$ as
\be\label{211113_02}
m_{t_0}(\z)
=\dfrac{i}{2}\sum\limits_{\gamma\in\Gamma}
\dfrac{\gamma(t_0)+\zeta}{\gamma(t_0)-\zeta}|\gamma'(t_0)|
=\dfrac{i}{2}\sum\limits_{\gamma\in\Gamma}
\left(\dfrac{2\gamma(t_0)}{\gamma(t_0)-\zeta}-1\right)|\gamma'(t_0)|.
\ee
Then
\be\label{211113_03}
m'_{t_0}(\z)
=i\sum\limits_{\gamma\in\Gamma}
\dfrac{\gamma(t_0)|\gamma'(t_0)|}{(\gamma(t_0)-\zeta)^2}
=it_0\sum\limits_{\gamma\in\Gamma}
\dfrac{\gamma'(t_0)}{(\gamma(t_0)-\zeta)^2}.
\ee
The later equality holds since
$$
|\g'(t_0)|=t_0\dfrac{\g'(t_0)}{\g(t_0)}.
$$
The derivative of the Martin function also equals
\be\label{221003_01}
m'_{t_0}(\z)=it_0\sum\limits_{\gamma\in\Gamma}
\dfrac{\gamma'(\z)}{(\gamma(\z)-t_0)^2}
\ee
(see Lemma \ref{230605-01} of Appendix).
\end{definition}
The following theorem was proved in \cite{Kh-Yud-2019} (under some additional assumptions about group $\G$)
\begin{theorem}[\cite{Kh-Yud-2019}]
Let $|t_0|=1$.
For every unitary character $\a$ of group $\G$ there exists an $\a$ automorphic function of the form
\be\label{230606-04}
(\z-t_0)h(\z),\quad h\in H^2
\ee
if and only if
\begin{enumerate}
\item
$\sum\limits_{\gamma\in\Gamma}|\gamma'(t_0)|<\infty$;
\item The derivative of the Martin function $m'_{t_0}(\z)$
is a function of bounded characteristic.
\end{enumerate}
\end{theorem}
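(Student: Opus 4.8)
The plan is to treat this statement as the boundary analogue of the Widom--Pommerenke theorem, with the Green function $g_{\z_0}$ replaced by the Martin function $m_{t_0}$ and the interior orbit of $\z_0$ replaced by the boundary orbit $\{\g(t_0)\}$. First I would reformulate the target: an $\a$-automorphic function of the form \eqref{230606-04} is the same as a nonzero $F\in H^2$ that is $\a$-automorphic and satisfies $F/(\z-t_0)\in H^2$. Since $|\a(\g)|=1$, the modulus $|F|$ is $\G$-invariant on $\bbT$, so such an $F$ in fact has an $L^2$ zero over the whole orbit $\{\g(t_0)\}$, not merely at $t_0$. This is the feature that ties the problem to the summability \eqref{BK-2019-11-03-03} and to the Martin function.

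\textbf{Sufficiency.} Assume (1) and (2). I would run the Poincar\'e theta-series mechanism quoted after the Widom--Pommerenke theorem, now attached to the weight-one form $m'_{t_0}$ of \eqref{221003_01} rather than to $g'_{\z_0}$. Writing $\Theta_0(\z)=\sum_\g \g'(\z)/(\g(\z)-t_0)^2 = m'_{t_0}(\z)/(it_0)$ and, for a character $\a$, the twisted series $R_\a(\z)=\sum_\g \overline{\a(\g)}\,\g'(\z)/(\g(\z)-t_0)$, a one-line reindexing gives the automorphy $R_\a(\d\z)\d'(\z)=\a(\d)R_\a(\z)$ and $\Theta_0(\d\z)\d'(\z)=\Theta_0(\z)$, so the ratio $v_\a:=R_\a/\Theta_0$ is a weight-zero $\a$-automorphic function. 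The identity terms show $R_\a\sim (\z-t_0)^{-1}$ and $\Theta_0\sim (\z-t_0)^{-2}$ near $t_0$, whence $v_\a$ has a first-order zero at $t_0$ and, by automorphy, over the whole orbit. The essential point, exactly as in Widom's theorem, is that the zeros of $m'_{t_0}$ (the critical points of the Martin function) would otherwise produce poles of $v_\a$; condition (2), the bounded characteristic of $m'_{t_0}$, is precisely the Blaschke-type summability over those critical points that lets one divide them out by a convergent Blaschke product and obtain a genuine bounded-characteristic function with the stated first-order vanishing. Finally I would upgrade ``bounded characteristic'' to membership in $(\z-t_0)H^2$: using the $\G$-invariance of $|v_\a|$ together with condition (1), $\sum_\g|\g'(t_0)|<\infty$, to control the boundary $L^2$ mass near $t_0$, one obtains $v_\a=(\z-t_0)h$ with $h\in H^2$. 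This produces a function of the required form for every character $\a$.

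\textbf{Necessity.} Suppose conversely that for every $\a$ there is a nonzero $\a$-automorphic $F_\a=(\z-t_0)h_\a$ with $h_\a\in H^2$. For (1), the first-order zero of $F_\a$ at $t_0$ together with $F_\a\in H^2$ and the $\G$-invariance of $|F_\a|$ forces the orbit $\{\g(t_0)\}$ to be a boundary set of convergent (Blaschke) type; summing the $H^2$ mass $\int_\bbT|F_\a(t)|^2/|t-t_0|^2\,dm(t)=\|h_\a\|^2$ over a fundamental domain via $\bbT=\bigsqcup_\g \g(E)$ and the change of variables $dm(\g s)=|\g'(s)|\,dm(s)$ yields exactly the summability \eqref{BK-2019-11-03-03}. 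Condition (2) is the genuinely Widom-type direction: the existence of such functions for \emph{all} characters must be shown to force $m'_{t_0}$ into the class of bounded characteristic. I would obtain this by transcribing Widom's argument to the boundary setting---replacing the Green function and its critical points by the Martin function and its critical points---so that an extremal/duality argument over $\G^*$, in the spirit of the infimum in \eqref{230601-01}, identifies the bounded characteristic of $m'_{t_0}$ as the exact obstruction to solvability for all characters.

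\textbf{Main obstacle.} I expect the hard part to be the necessity of (2), i.e.\ the Widom direction, together with the matching $H^2$ upgrade on the sufficiency side. Passing between the soft notion ``bounded characteristic'' and the precise space $(\z-t_0)H^2$---that is, securing $h\in H^2$ rather than merely $h$ of bounded characteristic---is exactly where condition (1) and the Carath\'eodory--Julia membership \eqref{230606-02} must be used, and controlling the critical points of the Martin function is the analytic core shared by both directions.
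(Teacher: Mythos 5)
First, a point of orientation: the paper does not prove this theorem at all --- it is quoted from \cite{Kh-Yud-2019}, and the paper explicitly notes that even there it is established only under additional assumptions on $\G$. So the only material in the present paper against which your plan can be measured is the parallel machinery of Sections 2--4: the factorization $m'_{t_0}=\D_{t_0}/\varphi$ of Theorem \ref{230311-01}, the bound of Corollary \ref{230514-02}, and the Poincar\'e-series argument of Theorem \ref{221231_04} for the related class $1+(\z-t_0)H^2$. Measured against that, your necessity-of-(1) step is sound: unfolding $\int_\bbT |F_\a(t)|^2|t-t_0|^{-2}\mu(dt)$ over $\bbT=\bigcup_\g\g(E)$ and using $|\g'(s)|\,|\g(s)-t_0|^{-2}=|(\g^{-1})'(t_0)|\,|s-\g^{-1}(t_0)|^{-2}$ (Lemma \ref{230605-01}) gives $\int_E|F_\a(s)|^2\sum_\g|\g'(t_0)|\,|s-\g(t_0)|^{-2}\mu(ds)<\infty$; since each summand is at least $|\g'(t_0)|/4$ and $\int_E|F_\a|^2>0$, condition \eqref{BK-2019-11-03-03} follows (modulo the requirement that the limit set be null so that the fundamental-domain decomposition exhausts $\bbT$).

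The rest has genuine gaps. In the sufficiency direction, your series $R_\a(\z)=\sum_\g\overline{\a(\g)}\,\g'(\z)/(\g(\z)-t_0)$ is not the weight the Martin function supplies, and its convergence does not follow from (1) and (2): Cauchy--Schwarz bounds it by $\bigl(\sum_\g|\g'(\z)|\bigr)^{1/2}\bigl(\sum_\g|\g'(t_0)|\,|\g(t_0)-\z|^{-2}\bigr)^{1/2}$, and the first factor is the convergent-type condition at an \emph{interior} point, which the boundary-orbit condition (1) does not obviously yield. More importantly, the three steps you yourself flag --- dividing out the critical points of $m_{t_0}$ (this is exactly Theorem \ref{230311-01}, a nontrivial subharmonicity argument, not a one-line Blaschke-product remark), extracting the expansion $v_\a=(\z-t_0)(1+o(1))$ from the identity terms (this needs the tail hypothesis \eqref{221231_01}, which does \emph{not} follow from (1) and (2) and is one of the ``additional assumptions'' of \cite{Kh-Yud-2019}), and upgrading bounded characteristic to membership in $(\z-t_0)H^2$ (done in Theorem \ref{221231_04} not by an $L^2$ estimate on a ratio of theta series but by proving $\Vert\mathcal P^\a\D_{t_0}\Vert_\infty\le1$ via Corollary \ref{230514-02} and then invoking Carath\'eodory--Julia) --- are precisely where the content lies, and in your write-up they are asserted rather than carried out. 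Finally, the necessity of (2) is reduced to ``transcribe Widom's argument,'' which the paper itself calls ``a long and subtle story'' \cite{Widom71}; note that the fundamental-domain computation that settled (1) does not give $\int_\bbT\log^+|m'_{t_0}|\,\mu(dt)<\infty$, because $\log^+|m'_{t_0}|$ is not $\G$-invariant and the integral over $\g(E)$ does not telescope. So the proposal is a reasonable map of the terrain, but both the analytic core of sufficiency and the entire necessity of (2) remain unproved.
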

The present paper is organized as follows. In Section \ref{230606-05} we discuss equivalent forms and consequences
of the assumption that $m'_{t_0}$ is a function of bounded characteristic. In particular we give a version
of Pommerenke theorem (Theorem \ref{230311-01}) stating that in this case
$$m'_{t_0}=\dfrac{\D_{t_0}}{\varphi},$$
where $\D_{t_0}$ is
an inner function, $\varphi$ is an outer function, $|\varphi|\le 4$. In Theorem \ref{221231_04} and Corollary \ref{230517-04} we
demonstrate that under assumptions \eqref{230312-03} and \eqref{221231_01} for every character $\a$ of the group $\G$ there exists
an $\a$ automorphic function of the form
$$1+(\z-t_0)h(\z),\quad h\in H^2.$$
In Section \ref{230606-06} we define kernels $k^{\a}_{t_0}$ as extremal functions in the above class of functions and we discuss some of their properties.
The kernels are used in
Theorem \ref{230606-07} to get a constraint on $t_0\dfrac{w'_0}{w_0}$ if there exists a $\b$ automorphic
analytic function $w(\z)$ on $\bbD$, $|w(\z)|\le 1$,
such that $w$ and $w'$ have
nontangential boundary values $w_0$, $|w_0|=1$ and $w'_0$, respectively, at the boundary point $t_0$.
In Remark \ref{230623-08} a necessary and sufficient condition is stated for this existence to hold for {\bf every}
character $\b$.

\section{Pommerenke Type Theorems.}\label{230606-05}

The goal of this section is to prove Theorems \ref{17oct14} and \ref{230311-01}.
These theorems are analogues of some parts of Theorem 4 in \cite{Pom}. They were proved
(under some additional assumptions on $\G$) in \cite{Kh-Yud-2019} (Theorems 5.1, 5.4).
We will need two lemmas.
\begin{lemma}\label{20230204-03}
Let $|p|\ge 1$. Then
$$
|p-1|\le 2|p-r|,\quad 0\le r\le 1.
$$
\end{lemma}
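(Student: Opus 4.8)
The plan is to prove the inequality by splitting $|p-1|$ using the real point $r$ as an intermediate vertex and then controlling the two resulting pieces separately. Since $r$ is real and lies in $[0,1]$, the triangle inequality gives
\[
|p-1| = |(p-r)+(r-1)| \le |p-r| + |r-1| = |p-r| + (1-r),
\]
where the last equality uses $0\le r\le 1$. Thus it remains only to absorb the term $1-r$ into a second copy of $|p-r|$.

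For this I would invoke the hypothesis $|p|\ge 1$. By the reverse triangle inequality,
\[
|p-r| \ge \bigl|\,|p| - |r|\,\bigr| = \bigl|\,|p| - r\,\bigr| = |p| - r \ge 1 - r,
\]
the middle equalities holding because $r\ge 0$ and $|p|\ge 1\ge r$ force $|p|-r\ge 0$. Substituting this lower bound back into the previous display yields
\[
|p-1| \le |p-r| + (1-r) \le |p-r| + |p-r| = 2|p-r|,
\]
which is the claim.

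The argument is entirely elementary, and there is no serious obstacle: the only step that actually uses the hypotheses is the estimate $|p-r|\ge 1-r$, which is precisely where $|p|\ge 1$ together with the sign and reality constraints $0\le r\le 1$ enter. An alternative would be to square both sides and reduce everything to an inequality in $\Re p$ and $|p|$ on the region $|p|\ge 1$, but the triangle-inequality route is shorter and makes transparent why the constant $2$ is the correct (and, taking $r=1$, sharp) one.
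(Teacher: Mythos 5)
Your proof is correct, and it takes a genuinely different and more elementary route than the paper's. You decompose $|p-1|\le |p-r|+(1-r)$ by the triangle inequality and then absorb the leftover $1-r$ via the reverse triangle inequality $|p-r|\ge |p|-r\ge 1-r$; every step is valid and the hypotheses $|p|\ge 1$, $0\le r\le 1$ enter exactly where you say they do. The paper instead writes $p=A(x+iy)$ with $A\ge 1$, $x^2+y^2=1$, and runs a calculus argument: it shows the ratio $|p-1|^2/|p-r|^2$ is decreasing in $x$, so the maximum occurs at $p=-A$, where the ratio equals $\bigl((A+1)/(A+r)\bigr)^2\le 4$. The paper's optimization buys the exact value of the maximum of the ratio for each $A$ and identifies the extremal configuration, while your argument is shorter and purely metric. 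One small correction to your closing remark: the constant $2$ is attained at $p=-1$, $r=0$ (where $|p-1|=2$ and $|p-r|=1$), not at $r=1$; at $r=1$ the two sides differ by a factor of $2$. This does not affect the proof itself.
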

\begin{proof}
We write $p$ as
$$
p=A(x+iy),\quad A\ge 1,\quad x^2+y^2=1.
$$
Then
$$
\dfrac{|p-1|^2}{|p-r|^2}=\dfrac{(Ax-1)^2+(Ay)^2}{(Ax-r)^2+(Ay)^2}=\dfrac{A^2-2Ax+1}{A^2-2Arx+r^2},\quad -1\le x\le 1.
$$
The derivative with respect to $x$ is
$$
\dfrac{-2A}{A^2-2Arx+r^2}-\dfrac{A^2-2Ax+1}{(A^2-2Arx+r^2)^2}(-2Ar)
$$
$$
=
\dfrac{-2A}{(A^2-2Arx+r^2)^2}\left[(A^2-2Arx+r^2)-(A^2-2Ax+1)r\right]
$$
$$
=
\dfrac{-2A}{(A^2-2Arx+r^2)^2}\left[A^2+r^2-A^2r-r\right]
$$
$$
=
\dfrac{-2A}{(A^2-2Arx+r^2)^2}(A^2-r)(1-r)\le 0,
$$
since $A\ge 1$ and $r\le 1$. Therefore, the maximum is assumed at $x=-1$. Respectively, $y=0$ and $p=-A$. Then
$$
\dfrac{|p-1|}{|p-r|}=\dfrac{|-A-1|}{|-A-r|}=\dfrac{A+1}{A+r}\le \dfrac{A+1}{A}=1+\dfrac{1}{A}\le 2,
$$
since $r\ge 0$ and $A\ge 1$.
\end{proof}
\begin{lemma}\label{17oct15}
Let $m_{t_0}$ be the Martin function defined in \eqref{211113_02}.
Let $\mathbb T$ be the unit circle and let $t\in\mathbb T$.
A real nontangential limit $m_{t_0}(t)$ exists and a finite nontangential limit $m'_{t_0}(t)$ exists
{\bf if and only if}
\be\label{230528-01}
\sum\limits_{\gamma\in\Gamma}
\dfrac{|\gamma'(t_0)|}{|\gamma(t_0)-t|^2}<\infty.
\ee
In this case
\be\label{230528-02}
m_{t_0}(t)
=\dfrac{i}{2}\sum\limits_{\gamma\in\Gamma}
\dfrac{\gamma(t_0)+t}{\gamma(t_0)-t}|\gamma'(t_0)|
\ee
and
\be\label{211113_06'}
it m'_{t_0}(t)
=\sum\limits_{\gamma\in\Gamma}
\dfrac{|\gamma'(t_0)|}{|\gamma(t_0)-t|^2}.
\ee
\end{lemma}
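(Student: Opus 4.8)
The plan is to exploit the fact that $m_{t_0}$ is a Herglotz function. From \eqref{211113_02} one computes, for $|\zeta|<1$,
\[
\Im m_{t_0}(\zeta)=\tfrac12\sum_{\gamma\in\Gamma}\Re\frac{\gamma(t_0)+\zeta}{\gamma(t_0)-\zeta}\,|\gamma'(t_0)|=\frac{1-|\zeta|^2}{2}\sum_{\gamma\in\Gamma}\frac{|\gamma'(t_0)|}{|\gamma(t_0)-\zeta|^2}\ge0,
\]
while the standing assumption \eqref{BK-2019-11-03-03} makes the series in \eqref{211113_02}--\eqref{211113_03} converge locally uniformly in $\bbD$. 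I would first record the termwise boundary identity that drives everything: when $|\gamma(t_0)|=|t|=1$, the relations $\overline{\gamma(t_0)}=1/\gamma(t_0)$ and $\bar t=1/t$ give $-t\,\gamma(t_0)/(\gamma(t_0)-t)^2=1/|\gamma(t_0)-t|^2$ and $\tfrac i2(\gamma(t_0)+t)/(\gamma(t_0)-t)\in\bbR$. This already explains why the candidate limit \eqref{230528-02} is real and why \eqref{211113_06'} must read $it\,m'_{t_0}(t)=\sum_\gamma|\gamma'(t_0)|/|\gamma(t_0)-t|^2$.

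For the sufficiency direction I would assume \eqref{230528-01} and prove convergence along nontangential approach by a dominating-series (Weierstrass) argument. The geometric input is that for $\zeta$ in a Stolz angle $|\zeta-t|\le\kappa(1-|\zeta|)$ at $t$ one has $|\gamma(t_0)-t|\le|\gamma(t_0)-\zeta|+|\zeta-t|\le(1+\kappa)|\gamma(t_0)-\zeta|$, since $1-|\zeta|\le|\gamma(t_0)-\zeta|$; along the radius $\zeta=rt$ this is exactly Lemma \ref{20230204-03} (with $p=\gamma(t_0)/t$), supplying the explicit constant $2$. Hence $|\gamma(t_0)-\zeta|^{-2}\le(1+\kappa)^2|\gamma(t_0)-t|^{-2}$, so the $m'_{t_0}$-series is dominated by $(1+\kappa)^2\sum_\gamma|\gamma'(t_0)|/|\gamma(t_0)-t|^2<\infty$; for the $m_{t_0}$-series the bound $|\tfrac i2(\gamma(t_0)+\zeta)/(\gamma(t_0)-\zeta)|\le(1+\kappa)|\gamma(t_0)-t|^{-1}$ together with the Cauchy--Schwarz estimate $\sum_\gamma|\gamma'(t_0)|/|\gamma(t_0)-t|\le(\sum_\gamma|\gamma'(t_0)|/|\gamma(t_0)-t|^2)^{1/2}(\sum_\gamma|\gamma'(t_0)|)^{1/2}<\infty$ gives a summable majorant. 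Both series then converge uniformly on the Stolz angle, so I may pass to the limit $\zeta\to t$ termwise, obtaining the finite nontangential limits, formula \eqref{230528-02}, the reality of $m_{t_0}(t)$, and \eqref{211113_06'}.

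For the necessity direction I would argue along radii and combine both hypotheses with Fatou's lemma. Writing $\zeta=rt$,
\[
\frac{\Im m_{t_0}(rt)}{1-r}=\frac{1+r}{2}\sum_{\gamma\in\Gamma}\frac{|\gamma'(t_0)|}{|\gamma(t_0)-rt|^2},
\]
a sum of nonnegative terms each tending to $|\gamma'(t_0)|/|\gamma(t_0)-t|^2$, so Fatou gives $\liminf_{r\to1}\Im m_{t_0}(rt)/(1-r)\ge\sum_\gamma|\gamma'(t_0)|/|\gamma(t_0)-t|^2$. On the other hand, since the radial limit $m_{t_0}(t)$ exists and is real, the fundamental theorem of calculus yields $\Im m_{t_0}(rt)=\Im\big(m_{t_0}(rt)-m_{t_0}(t)\big)=-\Im\big(t\int_r^1 m'_{t_0}(st)\,ds\big)$, and because $m'_{t_0}(st)$ has a finite radial limit $m'_{t_0}(t)$ the average $(1-r)^{-1}\int_r^1 m'_{t_0}(st)\,ds\to m'_{t_0}(t)$. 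Hence $\Im m_{t_0}(rt)/(1-r)\to-\Im\big(t\,m'_{t_0}(t)\big)$, a finite number, and the Fatou bound forces \eqref{230528-01}; comparing the two evaluations recovers \eqref{211113_06'} once more.

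The routine part is the sufficiency direction (uniform convergence on Stolz angles). I expect the main obstacle to be the necessity direction: one must use \emph{both} hypotheses --- the reality of the limit of $m_{t_0}$ to rewrite $\Im m_{t_0}(rt)$ as the imaginary part of a radial increment of $m'_{t_0}$, and the finiteness of the limit of $m'_{t_0}$ to control that increment --- and then interchange a $\liminf$ with an infinite sum via Fatou. Care is also needed to justify the termwise passage to the limit on a genuine Stolz angle, rather than merely radially, in the sufficiency direction, for which the elementary comparison $1-|\zeta|\le|\gamma(t_0)-\zeta|$ is the key geometric fact.
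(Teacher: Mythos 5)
Your proof is correct, but it takes a genuinely different route from the paper. The paper disposes of this lemma in one line by invoking its appendix Theorem \ref{230526-02} (a Frostman-type theorem for Herglotz functions with pure point measure, applied with $t_k=\gamma(t_0)$, $c_k=\tfrac12|\gamma'(t_0)|$); that theorem is in turn proved by monotone approximation with the partial sums $u_n$ and the Carath\'eodory--Julia machinery for functions with positive imaginary part (Theorems \ref{BK-2020-Jan4-03} and \ref{ACJ-2019-11-03-02}), i.e.\ ultimately via boundary Schwarz--Pick inequalities. You instead work directly with the series: for sufficiency, the Stolz-angle comparison $|\gamma(t_0)-t|\le(1+\kappa)|\gamma(t_0)-\zeta|$ gives a Weierstrass majorant and hence termwise passage to the nontangential limit (your Cauchy--Schwarz step, using the standing hypothesis \eqref{BK-2019-11-03-03}, correctly handles the $m_{t_0}$-series, whose terms decay only like $|\gamma(t_0)-t|^{-1}$); for necessity, Fatou applied to $\Im m_{t_0}(rt)/(1-r)=\tfrac{1+r}{2}\sum_\gamma|\gamma'(t_0)|/|\gamma(t_0)-rt|^2$ together with the FTC/Ces\`aro identification of the limit as $-\Im\bigl(t\,m'_{t_0}(t)\bigr)$ forces \eqref{230528-01}, and you rightly flag that the reality of $m_{t_0}(t)$ and the finiteness of $m'_{t_0}(t)$ are each indispensable there. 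What your approach buys is a self-contained elementary argument that never leaves the explicit series; what the paper's approach buys is reusability --- the appendix theorems cover general monotone limits of Herglotz functions and are used elsewhere, so the lemma comes for free once that machinery is in place. One cosmetic remark: your closing claim that ``comparing the two evaluations recovers \eqref{211113_06'}'' only yields $\Re\bigl(it\,m'_{t_0}(t)\bigr)=\sum_\gamma|\gamma'(t_0)|/|\gamma(t_0)-t|^2$ by itself; the full identity needs the reality of $it\,m'_{t_0}(t)$, which you already have from the termwise limit in the sufficiency direction, so nothing is missing.
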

\begin{proof}
This is Frostman Theorem \ref{230526-02} of Appendix.
\end{proof}
\begin{theorem} \label{17oct14} $m'_{t_0}$ is of bounded characteristic if and only if
\be\label{230312-01}
\int\limits_{\mathbb T}\log
\left(
\sum\limits_{\gamma\in\Gamma}
\dfrac{|\gamma'(t_0)|}{|\gamma(t_0)-t|^2}
\right)
\mu(dt)<\infty,
\ee
where $\mu(dt)$ is the normalized Lebesgue measure on $\mathbb T$.
\end{theorem}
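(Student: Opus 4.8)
The plan is to read ``bounded characteristic'' through the standard criterion that an analytic $f$ on $\bbD$ is of bounded characteristic if and only if $\sup_{0<r<1}\int_{\mathbb T}\log^+|f(rt)|\,\mu(dt)<\infty$, and to prove the two implications separately, using Lemma \ref{20230204-03} for the interior estimate and Lemma \ref{17oct15} for the boundary values. Throughout I abbreviate $S(\z)=\sum_{\gamma\in\Gamma}|\gamma'(t_0)|/|\gamma(t_0)-\z|^2$, the quantity under the logarithm in \eqref{230312-01} when $\z$ is a boundary point. I note first that the identity element contributes $1/|t_0-\z|^2$, so that $S(t)\ge 1/4$ on $\mathbb T$; hence $\log^- S$ is bounded and \eqref{230312-01} is equivalent to $\int_{\mathbb T}\log^+ S(t)\,\mu(dt)<\infty$.

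For the ``only if'' direction I would assume $m'_{t_0}$ is of bounded characteristic. Since $m_{t_0}$ is nonconstant, $m'_{t_0}\not\equiv 0$, so it has finite nontangential boundary values almost everywhere and $\int_{\mathbb T}\log^+|m'_{t_0}(t)|\,\mu(dt)<\infty$. At almost every $t$ the nontangential limit $m'_{t_0}(t)$ is finite, so Lemma \ref{17oct15} applies and yields both \eqref{230528-01} and the identity \eqref{211113_06'}, that is $|m'_{t_0}(t)|=S(t)$ a.e. Substituting this into the last integral produces \eqref{230312-01}.

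The ``if'' direction is the substantive one, and the main point is to transfer the boundary hypothesis to a uniform bound on interior circles. From \eqref{211113_03} and $|\gamma(t_0)|=1$ one gets the pointwise bound $|m'_{t_0}(\z)|\le S(\z)$ for every $\z\in\bbD$. The key step is to compare $S$ on the circle of radius $r$ with $S$ on $\mathbb T$: writing $\z=rt$ with $|t|=1$ and setting $p=\gamma(t_0)\overline t$ (so $|p|=1$), Lemma \ref{20230204-03} gives $|\gamma(t_0)-t|=|p-1|\le 2|p-r|=2|\gamma(t_0)-rt|$, hence $|\gamma(t_0)-rt|\ge\tfrac12|\gamma(t_0)-t|$ termwise, and therefore $S(rt)\le 4\,S(t)$ for all $0\le r<1$ and all $t\in\mathbb T$.

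Combining the two estimates gives $\log^+|m'_{t_0}(rt)|\le\log^+ S(rt)\le\log 4+\log^+ S(t)$, so that $\int_{\mathbb T}\log^+|m'_{t_0}(rt)|\,\mu(dt)\le \log 4+\int_{\mathbb T}\log^+ S(t)\,\mu(dt)$, a bound independent of $r$; by the hypothesis \eqref{230312-01} the right-hand side is finite, whence $\sup_{0<r<1}\int_{\mathbb T}\log^+|m'_{t_0}(rt)|\,\mu(dt)<\infty$ and $m'_{t_0}$ is of bounded characteristic. I expect the only delicate point to be exactly this uniform-in-$r$ interior bound; once Lemma \ref{20230204-03} is invoked as above it becomes routine, the remaining ingredients being the standard boundary theory of functions of bounded characteristic together with the Frostman-type identification of Lemma \ref{17oct15}.
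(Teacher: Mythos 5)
Your proof is correct and takes essentially the same route as the paper: the forward direction reads off \eqref{230312-01} from the boundary identity \eqref{211113_06'} of Lemma \ref{17oct15}, and the converse uses Lemma \ref{20230204-03} with $p=\gamma(t_0)\overline t$ to get the termwise bound $|\gamma(t_0)-rt|\ge\tfrac12|\gamma(t_0)-t|$, hence an $r$-uniform bound on $\int_{\mathbb T}\log^+|m'_{t_0}(rt)|\,\mu(dt)$, with the lower bound $1/|t_0-t|^2$ from the identity element disposing of $\log^-$. The only point you leave implicit (as does the paper, which defers it to the remark following the theorem) is that invoking Lemma \ref{17oct15} at a.e.\ $t$ also requires the a.e.\ existence of \emph{real} boundary values of $m_{t_0}$ itself, which holds because its Herglotz measure is pure point.
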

\begin{proof}
If $m'_{t_0}$ is of bounded characteristic, then inequality \eqref{230312-01} follows from formula \eqref{211113_06'}.
Conversely, for $0<r<1$
$$
|m'_{t_0}(rt)|
\le
\sum\limits_{\gamma\in\Gamma}
\dfrac{|\gamma'(t_0)|}{|\gamma(t_0)-rt|^2}
\le
4\sum\limits_{\gamma\in\Gamma}
\dfrac{|\gamma'(t_0)|}{|\gamma(t_0)-t|^2}
.
$$
The first inequality is immediate from formula \eqref{211113_03} for $m'_{t_0}$,
the second inequality is due to Lemma \ref{20230204-03} applied to $p=\g(t_0)\overline t$.
Therefore,
$$
\int\limits_{\mathbb T}
\log^+|m'_{t_0}(rt)|\mu(dt)
\le
\int\limits_{\mathbb T}\log^+
\left(
4\sum\limits_{\gamma\in\Gamma}
\dfrac{|\gamma'(t_0)|}{|\gamma(t_0)-t|^2}
\right)
\mu(dt)
$$
$$
=
\int\limits_{\mathbb T}\log
\left(
4\sum\limits_{\gamma\in\Gamma}
\dfrac{|\gamma'(t_0)|}{|\gamma(t_0)-t|^2}
\right)
\mu(dt).
$$
The later equality holds since
$$
4\sum\limits_{\gamma\in\Gamma}
\dfrac{|\gamma'(t_0)|}{|\gamma(t_0)-t|^2}
\ge
\dfrac{4}{|t_0-t|^2}\ge 1.
$$
Theorem follows.
\end{proof}
\begin{remark}
In what follows we assume that $m'_{t_0}$ is of bounded characteristic.
Since also measure corresponding to $m_{t_0}$ is pure point, the boundary values $m_{t_0}(t)$ exist
almost everywhere and are real. Therefore,
\eqref{211113_06'} holds for almost every $t\in\mathbb T$.
\end{remark}
\begin{theorem}\label{230311-01}
If $m'_{t_0}$ is of bounded characteristic, then 
\be\label{230530-05}
m'_{t_0}=\dfrac{\D_{t_0}}{\varphi},
\ee
where $\D_{t_0}$ is an inner function and $\varphi$ is an outer function, $|\varphi|\le 4$.
\end{theorem}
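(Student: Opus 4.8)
The plan is to deduce the factorization \eqref{230530-05} from the canonical inner--outer factorization of $m'_{t_0}$, the only real issue being that a priori $m'_{t_0}$ is merely of bounded characteristic, so I must first upgrade this to membership in the Smirnov class $N^+$. I would begin by recording the two boundary estimates that drive everything. By \eqref{211113_06'}, since the series there is a nonnegative real number, the nontangential boundary values satisfy
\be
|m'_{t_0}(t)|=\sum_{\g\in\G}\frac{|\g'(t_0)|}{|\g(t_0)-t|^2}\qquad\text{a.e. on }\bbT,
\ee
and this is bounded below by the identity term, $|m'_{t_0}(t)|\ge\frac{1}{|t_0-t|^2}\ge\frac14$. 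Second, exactly as in the proof of Theorem \ref{17oct14} --- applying Lemma \ref{20230204-03} to $p=\g(t_0)\overline t$ termwise in \eqref{211113_03} --- one has the domination $|m'_{t_0}(rt)|\le 4\sum_{\g}\frac{|\g'(t_0)|}{|\g(t_0)-t|^2}=4|m'_{t_0}(t)|$ for all $0<r<1$.

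The main step, and the place I expect the real difficulty, is to show $m'_{t_0}\in N^+$, that is, to rule out a singular inner factor appearing in the denominator of the canonical factorization. For this I would invoke the limit characterization of the Smirnov class: a function $f\in N$ lies in $N^+$ precisely when $\int_{\bbT}\log^+|f(rt)|\,\mu(dt)\to\int_{\bbT}\log^+|f(t)|\,\mu(dt)$ as $r\to1$. The domination above gives $\log^+|m'_{t_0}(rt)|\le\log^+\!\big(4|m'_{t_0}(t)|\big)$, and the right-hand side lies in $L^1(\bbT)$ because $m'_{t_0}$ is of bounded characteristic (equivalently, by Theorem \ref{17oct14} together with \eqref{230312-01}). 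Since $m'_{t_0}(rt)\to m'_{t_0}(t)$ for a.e.\ $t$, dominated convergence yields the required convergence of the integrals, hence $m'_{t_0}\in N^+$.

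Finally I would conclude. As $m'_{t_0}\in N^+$ and $m'_{t_0}\not\equiv0$, its canonical factorization reads $m'_{t_0}=\D_{t_0}\,O$ with $\D_{t_0}$ inner and $O$ outer. Set $\varphi=1/O$; the reciprocal of an outer function is outer, and the needed integrability $\log|\varphi(t)|\in L^1(\bbT)$ holds because $|m'_{t_0}(t)|\ge\frac14$ forces $\log|\varphi(t)|\le\log4$ from above, while $\int_{\bbT}\log^-|\varphi(t)|\,\mu(dt)=\int_{\bbT}\log^+|m'_{t_0}(t)|\,\mu(dt)<\infty$. Then $m'_{t_0}=\D_{t_0}/\varphi$, which is \eqref{230530-05}. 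For the bound, on the boundary $|\varphi(t)|=1/|m'_{t_0}(t)|=\big(\sum_{\g}\frac{|\g'(t_0)|}{|\g(t_0)-t|^2}\big)^{-1}\le|t_0-t|^2\le4$; since $\varphi$ is outer, $\log|\varphi|$ is the Poisson integral of $\log|\varphi(t)|\le\log4$, and therefore $|\varphi|\le4$ throughout $\bbD$, completing the proof.
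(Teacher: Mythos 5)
Your proof is correct, but it reaches the factorization by a genuinely different route than the paper. You reduce everything to the standard characterization of the Smirnov class ($f\in N$ lies in $N^+$ iff $\int_{\bbT}\log^+|f(rt)|\,\mu(dt)\to\int_{\bbT}\log^+|f(t)|\,\mu(dt)$), verifying the hypothesis by dominated convergence with the majorant $4\sum_{\g}|\g'(t_0)|/|\g(t_0)-t|^2$ supplied by Lemma \ref{20230204-03}; once $m'_{t_0}\in N^+$, the canonical factorization plus the lower bound $|m'_{t_0}(t)|\ge 1/4$ on the boundary gives $\varphi=1/O$ outer with $|\varphi|\le 4$, exactly as in the paper's closing step. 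The paper instead proves the Poisson majorization directly: it shows $\log u_{n,r}$ is subharmonic for the finite sums $u_{n,r}=\sum_{k\le n}|\phi_k|^2$ via a Cauchy--Schwarz computation on $\partial^2/\partial\z\partial\overline\z$, then passes to the limit in $r$ and $n$ to obtain the two-sided chain \eqref{230312-02}. Your argument is shorter and softer, at the cost of quoting the $N^+$ characterization as a black box where the paper is self-contained. The one substantive difference worth flagging is that your route establishes only the outer ends of \eqref{230312-02}, i.e.\ $\log|m'_{t_0}(\z)|\le\int_{\bbT}\log|m'_{t_0}(t)|\frac{1-|\z|^2}{|t-\z|^2}\mu(dt)$, and not the intermediate inequality involving $\sum_{\g}|\g'(t_0)|/|\g(t_0)-\z|^2$ inside the disk; that middle term is precisely what Corollary \ref{230514-02} extracts and what is later used to prove $\Vert\mathcal P^\alpha\Delta_{t_0}\Vert_\infty\le 1$ in Theorem \ref{221231_04}. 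So your proof fully establishes Theorem \ref{230311-01} as stated, but if you adopted it in place of the paper's argument you would still need a separate proof of Corollary \ref{230514-02}.
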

\begin{proof}
We enumerate elements of the group $\G$, $\G=\{\g_k\}$, and consider functions (for $0<r<1$)
$$
u_{n,r}(\z)=\sum\limits_{k=1}^n
\dfrac{|\gamma'_k(t_0)|}{|\gamma_k(t_0)-r\z|^2}
=
\sum\limits_{k=1}^n\overline{\phi_{k}(\z)}\phi_{k}(\z),
$$
where
$$
\phi_k(\z)=\dfrac{\sqrt{|\g'_k(t_0)|}}{\g_k(t_0)-r\z}.
$$
$\log u_{n,r}$ is a subharmonic function on $\bbD$. Indeed,
\begin{equation*}
\frac{\partial^2}{\partial \z \partial \overline \z}\log u_{n,r}(\z)
= -\frac{1}{u_{n,r}^2(\z)}\frac{\partial u_{n,r}}{\partial \z}
\frac{\partial u_{n,r}}{\partial \overline \z}+\frac{1}{u_{n,r}}
\frac{\partial^2 u_{n,r}}{\partial \z \partial \overline \z}
\end{equation*}
$$
=\frac{1}{u_{n,r}^2(\z)}
\left\{
\sum\limits_{k=1}^n\overline{\phi_{k}(\z)}\phi_{k}(\z)
\sum\limits_{k=1}^n\overline{\phi'_{k}(\z)}\phi'_{k}(\z)
-
\sum\limits_{k=1}^n\overline{\phi_{k}(\z)}\phi'_{k}(\z)
\sum\limits_{k=1}^n\overline{\phi'_{k}(\z)}\phi_{k}(\z)
\right\},
$$
which is nonnegative by the Cauchy-Schwarz inequality. Since $\log u_{n,r}$ is also continuous on $\overline{\bbD}$, we have
$$
\int\limits_{\mathbb T}\log
\sum\limits_{k=1}^n
\frac{|\gamma'_k(t_0)|}{|\gamma_k(t_0)-rt|^2}
\frac{1-|\z|^2}{|t-\z|^2}\mu(dt)
\ge
\log
\sum\limits_{k=1}^n
\frac{|\gamma'_k(t_0)|}{|\gamma_k(t_0)-r\z|^2}
.
$$
By Lemma \ref{20230204-03},
$$
\frac{|\gamma'_k(t_0)|}{|\gamma_k(t_0)-rt|^2}
\le 4
\frac{|\gamma'_k(t_0)|}{|\gamma_k(t_0)-t|^2}.
$$
Therefore, Dominated Convergence Theorem can be used to pass to the limit when $r\nearrow 1$
$$
\int\limits_{\mathbb T}\log
\sum\limits_{k=1}^n
\frac{|\gamma'_k(t_0)|}{|\gamma_k(t_0)-t|^2}
\frac{1-|\z|^2}{|t-\z|^2}\mu(dt)
\ge
\log
\sum\limits_{k=1}^n
\frac{|\gamma'_k(t_0)|}{|\gamma_k(t_0)-\z|^2}
.
$$
Since
$$
\log
\sum\limits_{k=1}^n
\frac{|\gamma'_k(t_0)|}{|\gamma_k(t_0)-t|^2}
\ge \log\dfrac{1}{4},
$$
we can apply Monotone Convergence Theorem to send $n$ to $\infty$
$$
\int\limits_{\mathbb T}\log
\sum\limits_{\g\in\G}
\frac{|\gamma'(t_0)|}{|\gamma(t_0)-t|^2}
\frac{1-|\z|^2}{|t-\z|^2}\mu(dt)
\ge
\log
\sum\limits_{\g\in\G}
\frac{|\gamma'(t_0)|}{|\gamma(t_0)-\z|^2}
.
$$
The latter inequality yields, in view of Lemma \ref{17oct15},
\be\label{230312-02}
\int\limits_{\mathbb T}\log
|m'_{t_0}(t)|
\frac{1-|\z|^2}{|t-\z|^2}\mu(dt)
\ge
\log
\sum\limits_{\g\in\G}
\frac{|\gamma'(t_0)|}{|\gamma(t_0)-\z|^2}
\ge \log |m'_{t_0}(\z)|
.
\ee
Since $m'_{t_0}$ is of bounded characteristic, it can be written as
$$
m'_{t_0}=\dfrac{\D_1 O_1}{\D_2 O_2},
$$
where $\D_1, \D_2$ are inner functions, $O_1, O_2$ are bounded outer functions. Then \eqref{230312-02} reads as
$$
\log \left|\dfrac{O_1(\z)}{O_2(\z)}\right|\ge \log \left|\dfrac{\D_1(\z)O_1(\z)}{\D_2(\z)O_2(\z)}\right|,
$$
that is,
$$
\left|\dfrac{\D_1(\z)}{\D_2(\z)}\right|\le 1.
$$
This means that
$$
\D_{t_0}=\dfrac{\D_1}{\D_2}
$$
is an inner function. Since $|m'_{t_0}(t)|\ge 1/4$,
$$
\dfrac{O_1}{O_2}=\dfrac{1}{\varphi},
$$
$\varphi$ is outer, $|\varphi|\le 4$.
\end{proof}
\begin{corollary}\label{230514-02}
\be\label{230514-01}
\dfrac{|m'_{t_0}(\z)|}{|\D_{t_0}(\z)|}
\ge
\sum\limits_{\g\in\G}
\frac{|\gamma'(t_0)|}{|\gamma(t_0)-\z|^2}
\ge |m'_{t_0}(\z)|
.
\ee
\end{corollary}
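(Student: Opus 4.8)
The plan is to extract both inequalities directly from the factorization $m'_{t_0}=\D_{t_0}/\varphi$ of Theorem \ref{230311-01}, combined with the chain of estimates \eqref{230312-02} already established in its proof; no new machinery is needed. The right-hand inequality is in fact immediate, since it is precisely the second inequality in \eqref{230312-02}. That inequality itself comes from formula \eqref{211113_03} by the triangle inequality, using that $|\gamma(t_0)|=1$ for every $\gamma\in\Gamma$, so that
$$
|m'_{t_0}(\z)|
\le
\sum\limits_{\gamma\in\Gamma}\dfrac{|\gamma(t_0)|\,|\gamma'(t_0)|}{|\gamma(t_0)-\z|^2}
=
\sum\limits_{\gamma\in\Gamma}\dfrac{|\gamma'(t_0)|}{|\gamma(t_0)-\z|^2}.
$$

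For the left-hand inequality I would exploit that $\varphi$ is outer. Since $m'_{t_0}/\D_{t_0}=1/\varphi$ and the reciprocal of an outer function is again outer, $\log|m'_{t_0}/\D_{t_0}|=-\log|\varphi|$ is harmonic on $\bbD$ and is recovered as the Poisson integral of its own boundary values. Now $|\D_{t_0}(t)|=1$ for almost every $t\in\bbT$, while $|\varphi(t)|=1/|m'_{t_0}(t)|$ almost everywhere, so those boundary values equal $\log|m'_{t_0}(t)|$. Hence
$$
\log\left|\dfrac{m'_{t_0}(\z)}{\D_{t_0}(\z)}\right|
=
\int\limits_{\bbT}\log|m'_{t_0}(t)|\,\dfrac{1-|\z|^2}{|t-\z|^2}\mu(dt),
$$
and the first inequality in \eqref{230312-02} bounds the right-hand side from below by $\log\sum_{\gamma}|\gamma'(t_0)|/|\gamma(t_0)-\z|^2$. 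Exponentiating yields the stated lower bound.

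There is no genuine obstacle here, as the substantive analytic content was already spent in proving Theorem \ref{230311-01}; this corollary merely records its consequence at the level of moduli. The only point demanding slight care is the representation of the outer factor $1/\varphi$ by the Poisson integral of its boundary modulus and the a.e. identification of that modulus with $\log|m'_{t_0}(t)|$ — which is exactly where the bounded-characteristic hypothesis and the factorization $m'_{t_0}=\D_1 O_1/(\D_2 O_2)$ from the proof of Theorem \ref{230311-01} enter.
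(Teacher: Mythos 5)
Your proposal is correct and follows essentially the same route as the paper: both read off the two inequalities from \eqref{230312-02}, identifying its leftmost term with $\log\bigl(|m'_{t_0}(\z)|/|\D_{t_0}(\z)|\bigr)$ via the outer-function property of $\varphi$ and the a.e.\ identity $|\varphi(t)|=1/|m'_{t_0}(t)|$ on $\bbT$ (this is precisely \eqref{230530-01} in the paper). Nothing further is needed.
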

\begin{proof}
This is an immediate consequence of \eqref{230312-02} since the most left term there is
$$
\int\limits_{\mathbb T}\log
|m'_{t_0}(t)|
\frac{1-|\z|^2}{|t-\z|^2}\mu(dt)
=
\int\limits_{\mathbb T}\log
\dfrac{1}{|\varphi(t)|}
\frac{1-|\z|^2}{|t-\z|^2}\mu(dt)
$$
\be\label{230530-01}
=
\log\dfrac{1}{|\varphi(\z)|}
=\log\dfrac{|m'_{t_0}(\z)|}{|\D_{t_0}(\z)|}.
\ee
\end{proof}

\section{Automorphic Properties of $m'_{t_0}$, $\varphi$ and $\D_{t_0}$.}
\begin{proposition}
\be\label{230529-01}
m'_{t_0}(\g(\z))\cdot\g'(\z)
=m'_{t_0}(\z).
\ee
\end{proposition}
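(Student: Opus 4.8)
The plan is to establish the transformation law
\be\label{plan-goal}
m'_{t_0}(\g(\z))\cdot\g'(\z)=m'_{t_0}(\z)
\ee
by differentiating a corresponding \emph{invariance} property of the Martin function $m_{t_0}$ itself, and then checking that the series manipulations are justified by the absolute convergence already guaranteed under assumption \eqref{BK-2019-11-03-03}. The starting observation is that the defining series \eqref{211113_02} is a sum over the whole group $\G$, and composing the argument with a fixed $\g\in\G$ should merely reindex the orbit. Concretely, I would first write $m_{t_0}(\g(\z))$ using formula \eqref{211113_02} and apply a change of summation index of the form $\delta=\g^{-1}\cdot(\,\cdot\,)$ (or the appropriate composition), using that right- or left-translation by a fixed group element permutes $\G$. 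The cleanest route, however, is to differentiate the alternative expression \eqref{221003_01} for $m'_{t_0}$, namely
\be\label{plan-series}
m'_{t_0}(\z)=it_0\sum\limits_{\gamma\in\Gamma}
\dfrac{\gamma'(\z)}{(\gamma(\z)-t_0)^2},
\ee
since in this form the variable $\z$ enters through $\g(\z)$, which interacts naturally with an inner composition $\g(\z)$.

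\emph{The key computation} is as follows. Replacing $\z$ by $\g_0(\z)$ for a fixed $\g_0\in\G$ in \eqref{plan-series} gives
\be\label{plan-compute}
m'_{t_0}(\g_0(\z))
=it_0\sum\limits_{\gamma\in\Gamma}
\dfrac{\gamma'(\g_0(\z))}{(\gamma(\g_0(\z))-t_0)^2}.
\ee
By the chain rule, $(\gamma\circ\g_0)'(\z)=\gamma'(\g_0(\z))\cdot\g_0'(\z)$, so multiplying \eqref{plan-compute} by $\g_0'(\z)$ turns each summand's numerator into $(\gamma\circ\g_0)'(\z)$ while the denominator becomes $((\gamma\circ\g_0)(\z)-t_0)^2$. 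Thus
\be\label{plan-reindex}
m'_{t_0}(\g_0(\z))\cdot\g_0'(\z)
=it_0\sum\limits_{\gamma\in\Gamma}
\dfrac{(\gamma\circ\g_0)'(\z)}{((\gamma\circ\g_0)(\z)-t_0)^2}.
\ee
Now the map $\gamma\mapsto\gamma\circ\g_0$ is a bijection of $\G$ onto itself (right multiplication by the fixed element $\g_0$), so reindexing the sum with $\delta=\gamma\circ\g_0$ recovers exactly the series \eqref{plan-series} for $m'_{t_0}(\z)$, which is the desired identity \eqref{plan-goal}.

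\emph{The main obstacle} is purely a matter of justifying that the reindexing is legitimate, i.e.\ that the series converges absolutely and unconditionally so that a permutation of its terms does not change the sum. This follows from the hypothesis \eqref{BK-2019-11-03-03} that $\sum_{\gamma\in\Gamma}|\gamma'(t_0)|<\infty$, together with the convergence expressed in Lemma \ref{17oct15} (condition \eqref{230528-01}); since we are assuming throughout this section that $m'_{t_0}$ is of bounded characteristic, the series defining $m'_{t_0}$ converges absolutely at interior points $\z\in\bbD$, where the denominators $(\gamma(\z)-t_0)^2$ stay bounded away from zero. I would state this absolute convergence explicitly, note that it permits the bijective reindexing $\gamma\mapsto\gamma\circ\g_0$, and thereby complete the proof. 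The only subtlety to flag is the choice of composition convention (whether $\G$ acts as $\g(\z)$ on the left or right), but either convention yields a bijection of $\G$, so the argument is insensitive to this choice.
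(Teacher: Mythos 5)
Your proposal is correct and follows essentially the same route as the paper: both start from formula \eqref{221003_01}, apply the chain rule to recognize $(\gamma\circ\g_0)'(\z)$ in each summand, and reindex the sum over $\G$ by the bijection $\gamma\mapsto\gamma\circ\g_0$. The paper states this in one line without commenting on the rearrangement; your extra remark that absolute convergence (guaranteed termwise by Lemma \ref{230605-01} together with \eqref{BK-2019-11-03-03}, since $|\z-\gamma^{-1}(t_0)|\ge 1-|\z|$) licenses the reindexing is a harmless and reasonable addition.
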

\begin{proof}
By formula \eqref{221003_01},
$$
m'_{t_0}(\g(\z))\cdot\g'(\z)
=it_0\sum\limits_{\widetilde\gamma\in\Gamma}
\dfrac{\widetilde\gamma'(\g(\z))\cdot\g'(\z)}{(\widetilde\gamma(\g(\z))-t_0)^2}
=m'_{t_0}(\z).
$$
\end{proof}
\begin{lemma}\label{230530-08}
$$
\frac{1-|\g(\z)|^2}{|t-\g(\z)|^2}=
\frac{1-|\z|^2}{|\g^{-1}(t)-\z|^2}
\left|(\g^{-1})'(t)\right|.
$$
\end{lemma}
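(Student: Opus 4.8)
The plan is to reduce the identity to two standard invariance properties of the disk automorphism $\g$, written in the normalized form $\g(\z)=\frac{a\z+b}{\bar b\z+\bar a}$ with $|a|^2-|b|^2=1$. First I would record the derivative $\g'(\z)=\frac{1}{(\bar b\z+\bar a)^2}$, so that $|\g'(\z)|=|\bar b\z+\bar a|^{-2}$, together with the two identities
$$
1-|\g(\z)|^2=|\g'(\z)|\,(1-|\z|^2),\qquad
|\g(\z)-\g(w)|^2=|\g'(\z)|\,|\g'(w)|\,|\z-w|^2 .
$$
Both follow from a direct substitution: the first from $1-|\g(\z)|^2=\frac{(1-|\z|^2)(|a|^2-|b|^2)}{|\bar b\z+\bar a|^2}$, and the second from the elementary M\"obius difference formula $\g(\z)-\g(w)=\frac{\z-w}{(\bar b\z+\bar a)(\bar b w+\bar a)}$, upon taking moduli and recognizing the two derivative factors.

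The key trick is then to write $t=\g(s)$ with $s=\g^{-1}(t)$ (note $s\in\bbT$ since $\g$ preserves the unit circle, so the second identity applies to the boundary point $s$ by continuity), and to apply the difference identity with $w=s$:
$$
|t-\g(\z)|^2=|\g(s)-\g(\z)|^2=|\g'(s)|\,|\g'(\z)|\,|s-\z|^2 .
$$
Dividing the first identity by this expression, the common factor $|\g'(\z)|$ cancels, leaving
$$
\frac{1-|\g(\z)|^2}{|t-\g(\z)|^2}=\frac{1-|\z|^2}{|\g'(s)|\,|s-\z|^2}.
$$

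Finally I would eliminate $|\g'(s)|$ using the chain rule: since $s=\g^{-1}(t)$ we have $\g'(s)\,(\g^{-1})'(t)=1$, hence $|\g'(s)|^{-1}=|(\g^{-1})'(t)|$. Substituting this and recalling $s=\g^{-1}(t)$ gives exactly the asserted formula. The computation is entirely routine; the only point requiring care is bookkeeping of where each derivative is evaluated (at $s$ versus at $t$), which is precisely why I would set up the two invariance identities at the outset rather than expanding the coordinate expression $\frac{1-|\g(\z)|^2}{|t-\g(\z)|^2}$ directly. There is no genuine obstacle here, merely the discipline of keeping the chain-rule relation between $\g'(\g^{-1}(t))$ and $(\g^{-1})'(t)$ straight.
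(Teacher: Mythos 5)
Your proof is correct, and it is organized differently from the paper's. The paper proves the lemma by a single direct coordinate computation: it expands $\frac{1-|\g(\z)|^2}{|\g(\z)-t|^2}$ as $\frac{|\overline b\z+\overline a|^2-|a\z+b|^2}{|a\z+b-t(\overline b\z+\overline a)|^2}$, simplifies the numerator to $1-|\z|^2$, and refactors the denominator as $|-t\overline b+a|^2\bigl|\z-\frac{t\overline a-b}{-t\overline b+a}\bigr|^2$, at which point $\g^{-1}(t)$ and $(\g^{-1})'(t)$ are recognized from their explicit formulas. You instead factor the identity through the two standard invariance relations $1-|\g(\z)|^2=|\g'(\z)|(1-|\z|^2)$ and $|\g(\z)-\g(w)|^2=|\g'(\z)|\,|\g'(w)|\,|\z-w|^2$, substitute $w=\g^{-1}(t)$, cancel $|\g'(\z)|$, and convert $|\g'(\g^{-1}(t))|^{-1}$ to $|(\g^{-1})'(t)|$ by the chain rule --- precisely the ``direct expansion'' route you chose to avoid is the one the paper takes. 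The two arguments are of comparable length; yours is more modular and reusable (the two invariance identities are standard facts one wants anyway), while the paper's is self-contained in one display and never needs the M\"obius difference formula. One small remark: your appeal to ``continuity'' to justify applying the difference identity at the boundary point $s=\g^{-1}(t)\in\bbT$ is unnecessary, since the identity $\g(\z)-\g(w)=\frac{\z-w}{(\overline b\z+\overline a)(\overline b w+\overline a)}$ is purely algebraic and holds wherever the denominators are nonzero, in particular on the circle.
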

\begin{proof}
Let
$$
\g(\z)=\dfrac{a\z+b}{\overline b \z +\overline a},
$$
where $|a|^2-|b|^2=1$. Then
$$
\frac{1-|\g(\z)|^2}{|\g(\z)-t|^2}
=\frac{|\overline b\z+\overline a|^2-|a\z+b|^2}{|a\z+b-t(\overline b\z+\overline a)|^2}
$$
$$
=\dfrac{1-|\z|^2}{\left|\z-\dfrac{t\overline a-b}{-t\overline b+a}\right|^2}
\cdot
\dfrac{1}{|-t\overline b+a|^2}
$$
$$
=\frac{1-|\z|^2}{|\z-\g^{-1}(t)|^2}
\left|(\g^{-1})'(t)\right|,
$$
since
$$
\g^{-1}(t)=\dfrac{t\overline a-b}{-t\overline b+a}
\quad\text{and}\quad
(\g^{-1})'(t)=\dfrac{1}{(-t\overline b+a)^2}.
$$
\end{proof}
\begin{proposition}
\be\label{230530-02}
|\varphi(\g(\z))|
=|\varphi(\z)|\cdot|\g'(\z)|.
\ee
\end{proposition}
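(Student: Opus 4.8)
The plan is to exploit the fact that $\varphi$ is \emph{outer}, so that $\log|\varphi|$ is recovered from its boundary modulus by the Poisson integral. Concretely, since $|\D_{t_0}(t)|=1$ a.e. on $\bbT$, formula \eqref{230530-01} of Corollary \ref{230514-02} gives the representation
$$
\log|\varphi(\z)|
=\int\limits_{\bbT}\log\dfrac{1}{|m'_{t_0}(t)|}\,\dfrac{1-|\z|^2}{|t-\z|^2}\,\mu(dt).
$$
Taking the logarithm of the asserted identity \eqref{230530-02}, it suffices to prove
$$
\log|\varphi(\g(\z))|=\log|\varphi(\z)|+\log|\g'(\z)|,
$$
and I would establish this by evaluating the Poisson integral above at the point $\g(\z)$ and transporting it back to $\z$.

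For the second step I would insert $\g(\z)$ into the Poisson kernel and apply Lemma \ref{230530-08}, which rewrites
$$
\dfrac{1-|\g(\z)|^2}{|t-\g(\z)|^2}
=\dfrac{1-|\z|^2}{|\g^{-1}(t)-\z|^2}\left|(\g^{-1})'(t)\right|.
$$
I would then change variables $t=\g(s)$. As $\g$ is an orientation-preserving diffeomorphism of $\bbT$, the normalized Lebesgue measure transforms as $\mu(dt)=|\g'(s)|\,\mu(ds)$, and since $|(\g^{-1})'(\g(s))|=|\g'(s)|^{-1}$, the Jacobian factor cancels: $|(\g^{-1})'(t)|\,\mu(dt)=\mu(ds)$. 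Simultaneously $\g^{-1}(t)=s$, so the Poisson kernel becomes $(1-|\z|^2)/|s-\z|^2$. Finally I would invoke the boundary form of the transformation rule \eqref{230529-01}, namely $|m'_{t_0}(\g(s))|=|m'_{t_0}(s)|/|\g'(s)|$ a.e. on $\bbT$, to split the integrand:
$$
\log\dfrac{1}{|m'_{t_0}(\g(s))|}=\log\dfrac{1}{|m'_{t_0}(s)|}+\log|\g'(s)|.
$$

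Assembling these substitutions yields
$$
\log|\varphi(\g(\z))|
=\int\limits_{\bbT}\log\dfrac{1}{|m'_{t_0}(s)|}\,\dfrac{1-|\z|^2}{|s-\z|^2}\,\mu(ds)
+\int\limits_{\bbT}\log|\g'(s)|\,\dfrac{1-|\z|^2}{|s-\z|^2}\,\mu(ds).
$$
The first integral is exactly $\log|\varphi(\z)|$. For the second, I would note that $\g'(\z)=1/(\overline b\z+\overline a)^2$ is analytic and zero-free on a neighborhood of $\overline{\bbD}$ (its only pole $-\overline a/\overline b$ lies outside, since $|a|>|b|$), so $\log|\g'|$ is harmonic there and equals the Poisson integral of its boundary values; hence the second integral is $\log|\g'(\z)|$. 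This gives the desired identity, and exponentiation completes the proof.

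The step I expect to require the most care is the passage of \eqref{230529-01} to the boundary: one must check that the nontangential boundary values of $m'_{t_0}$ exist a.e.\ (guaranteed by the standing assumption and the Remark following Theorem \ref{17oct14}) and that, because $\g$ extends to a diffeomorphism of $\bbT$ preserving both nontangential approach regions and Lebesgue null sets, the functional equation survives in the limit for almost every $s$. The measure bookkeeping and the harmonicity of $\log|\g'|$ are then routine.
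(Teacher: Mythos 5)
Your proposal is correct and follows essentially the same route as the paper: starting from the Poisson representation \eqref{230530-01} of $\log|\varphi|$, applying Lemma \ref{230530-08} to the kernel at $\g(\z)$, changing variables $t=\g(s)$ so the Jacobian cancels, invoking the boundary form of \eqref{230529-01}, and identifying the remaining Poisson integral of $\log|\g'|$ with $\log|\g'(\z)|$ (the paper phrases this last step as $\g'$ being outer, which is equivalent to your harmonicity argument). No gaps.
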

\begin{proof}
By \eqref{230530-01},
$$
\log|\varphi(\z)|
=
-\int\limits_{\mathbb T}\log
|m'_{t_0}(t)|
\frac{1-|\z|^2}{|t-\z|^2}\mu(dt).
$$
Then
$$
\log|\varphi(\g(\z))|
=
-\int\limits_{\mathbb T}\log
|m'_{t_0}(t)|
\frac{1-|\g(\z)|^2}{|t-\g(\z)|^2}\mu(dt)
$$
(by Lemma \ref{230530-08})
$$
=
-\int\limits_{\mathbb T}\log
|m'_{t_0}(t)|
\frac{1-|\z|^2}{|\g^{-1}(t)-\z|^2}
\left|(\g^{-1})'(t)\right|\mu(dt)
$$
(by substitution $t := \g(t)$)
$$
=
-\int\limits_{\mathbb T}\log
|m'_{t_0}(\g(t))|
\frac{1-|\z|^2}{|\g^{-1}(\g(t))-\z|^2}
\left|(\g^{-1})'(\g(t))\right||\g'(t)|\mu(dt)
$$
$$
=
-\int\limits_{\mathbb T}\log
|m'_{t_0}(\g(t))|
\frac{1-|\z|^2}{|t-\z|^2}\mu(dt).
$$
(by formula \eqref{230529-01})
$$
=
-\int\limits_{\mathbb T}\log
\dfrac{|m'_{t_0}(t)|}{|\g'(t)|}
\frac{1-|\z|^2}{|t-\z|^2}\mu(dt)
$$
$$
=
\log|\varphi(\z)|+\log|\g'(\z)|,
$$
since $\g'(\z)=\dfrac{1}{(\overline b\z+\overline a)^2}$ is an outer function.
Thus,
$$
|\varphi(\g(\z))|
=|\varphi(\z)|\cdot|\g'(\z)|.
$$
\end{proof}
\begin{proposition}
There exists a character of group $\G$ (denote it as $\d_{t_0}$) such that
\be\label{230530-03}
\D_{t_0}(\g(\z))=\d_{t_0}(\g)\D_{t_0}(\z).
\ee
\end{proposition}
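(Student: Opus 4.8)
The plan is to show that the quotient
$$
\psi_\g(\z) := \dfrac{\D_{t_0}(\g(\z))}{\D_{t_0}(\z)}
$$
is a unimodular constant, which we then christen $\d_{t_0}(\g)$; the character property will follow automatically from the composition structure. The idea is to exploit the asymmetry between the two automorphic identities already in hand: \eqref{230529-01} gives the \emph{exact} transformation law of $m'_{t_0}$, whereas \eqref{230530-02} controls only the \emph{modulus} of $\varphi$. Combining them will pin down $\psi_\g$ completely.

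First I would write $\D_{t_0}=m'_{t_0}\varphi$ (from Theorem \ref{230311-01}) and substitute the exact relation $m'_{t_0}(\g(\z))=m'_{t_0}(\z)/\g'(\z)$ coming from \eqref{230529-01}. The factors $m'_{t_0}$ cancel and I obtain the clean formula
$$
\dfrac{\D_{t_0}(\g(\z))}{\D_{t_0}(\z)}=\dfrac{\varphi(\g(\z))}{\g'(\z)\,\varphi(\z)}.
$$
The point of this identity is that the left-hand side is a quotient of two \emph{inner} functions, while the right-hand side is built from \emph{outer} functions: $\varphi$ is outer by Theorem \ref{230311-01}, $\g'(\z)=1/(\overline b\z+\overline a)^2$ is outer as already noted in the proof of \eqref{230530-02}, and $\varphi\circ\g$ is outer because composition with a conformal automorphism of $\bbD$ preserves the inner–outer factorization (it carries Blaschke products to Blaschke products, singular inner factors to singular inner factors, and outer functions to outer functions). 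Since $\varphi$ and $\g'$ are zero-free on $\bbD$, the quotient $\psi_\g$ is analytic and zero-free there, and being a ratio of outer functions it is itself outer.

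It then remains to read off the boundary modulus. By \eqref{230530-02} we have $|\varphi(\g(\z))|=|\varphi(\z)|\,|\g'(\z)|$, so $|\psi_\g|=1$ almost everywhere on $\bbT$. An outer function with unimodular boundary values is a unimodular constant, and I set $\d_{t_0}(\g)$ equal to this constant, which establishes \eqref{230530-03}. To see that $\d_{t_0}$ is a character, I would evaluate $\D_{t_0}\big((\g_1\g_2)(\z)\big)$ in two ways using the relation just established: directly it equals $\d_{t_0}(\g_1\g_2)\D_{t_0}(\z)$, while iterating gives $\d_{t_0}(\g_1)\d_{t_0}(\g_2)\D_{t_0}(\z)$; hence $\d_{t_0}(\g_1\g_2)=\d_{t_0}(\g_1)\d_{t_0}(\g_2)$, and $\d_{t_0}$ is a unitary character of $\G$.

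The main obstacle is the middle step. Because \eqref{230530-02} supplies only $|\varphi\circ\g|$ and not $\varphi\circ\g$ itself, one cannot identify $\psi_\g$ by a direct formula; the argument must instead route through the fact that disk automorphisms preserve the inner–outer factorization, which is what forces the a priori meromorphic ratio of inner functions to be a single outer function and, together with the modulus computation, a constant. Once that is secured, the remaining verifications are routine bookkeeping.
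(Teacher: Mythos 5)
Your proof is correct and follows essentially the same route as the paper: both arguments combine the exact transformation law \eqref{230529-01} for $m'_{t_0}$ with the modulus identity \eqref{230530-02} for $\varphi$ and the factorization $\D_{t_0}=\varphi\, m'_{t_0}$ to conclude that $\z\mapsto\D_{t_0}(\g(\z))/\D_{t_0}(\z)$ has modulus identically one and is therefore a unimodular constant, the character property being automatic from composition. Your detour through outerness of the quotient (rather than the paper's direct appeal to $|\D_{t_0}(\g(\z))|=|\D_{t_0}(\z)|$ on $\bbD$) merely makes explicit the elementary final step that the paper leaves implicit.
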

\begin{proof}
Combining \eqref{230529-01} and \eqref{230530-02} we get
\be\label{230530-04}
|\varphi(\g(\z))\cdot m'_{t_0}(\g(\z))|
=|\varphi(\z)\cdot m'_{t_0}(\z)|.
\ee
In view of \eqref{230530-05}, this means that
$$
|\D_{t_0}(\g(\z))|=|\D_{t_0}(\z)|.
$$
The later yields \eqref{230530-03}.
\end{proof}

\section{Existence of $\a$-automorphic Functions in $1+(t-t_0)H^2$.}
\begin{definition}
Let $\G$ be a Fuchsian group and $\a$ be a unitary character of $\G$.
Poincar\'e theta series associated with Martin function $m_{t_0}$ is defined as
$$
(\mathcal P^\a f) (\z)
=\frac{\sum\limits_{\gamma\in\Gamma}\overline{\a(\gamma)}f(\g(\z))
\dfrac{\gamma'(\z)}{(\gamma(\z)-t_0)^2}}
{\sum\limits_{\gamma\in\Gamma}\dfrac{\gamma'(\z)}{(\gamma(\z)-t_0)^2}}
=\frac{\sum\limits_{\gamma\in\Gamma}\a(\gamma)f(\g^{-1}(\z))
\dfrac{\gamma'(t_0)}{(\gamma(t_0)-\z)^2}}
{\sum\limits_{\gamma\in\Gamma}\dfrac{\gamma'(t_0)}{(\gamma(t_0)-\z)^2}}
.
$$
See Appendix (Lemma \ref{230605-01}) for equivalence of the two forms. One can see from the first form that
$\mathcal P^\a f$ is $\a$ automorphic.
Observe that up to a constant factor the denominator is $m'_{t_0}(\z)$.
\end{definition}
\begin{theorem}\label{221231_04}
Assume that the following nontangential limits exist
$$
|\D_{t_0}(t_0)|=1,\quad \D'_{t_0}(t_0)\quad\text{\ is\ finite},
$$
that is,
\be\label{230312-03}
\D_{t_0}(\z)=\D_{t_0}(t_0)+ \D'_{t_0}(t_0)(\z-t_0)+o(\z-t_0),\quad |\D_{t_0}(t_0)|=1,
\ee
as $\z$ approaches $t_0$ nontangentially.
Assume that
\be\label{221231_01}
|t_0-\z|\sum\limits_{\gamma\ne e}
\dfrac{|\gamma'(t_0)|}{|\gamma(t_0)-\zeta|^2}=o(1)
\ee
as $\z\to t_0$ nontangentially
\footnote{By the Dominated Convergence Theorem, it is true that
$
|t_0-\z|^2\sum\limits_{\gamma\ne e}
\dfrac{|\gamma'(t_0)|}{|\gamma(t_0)-\zeta|^2}=o(1)
$, but here we need more.
}.
Then for every character $\alpha$
\be\label{230517-01}
\mathcal P^\alpha \Delta_{t_0}\in H^\infty,\quad \Vert\mathcal P^\alpha \Delta_{t_0}\Vert_{H^\infty}\le 1,
\ee
\be\label{230517-02}
(\mathcal P^\alpha \Delta_{t_0}) (\z)=\D_{t_0}(t_0)+\D'_{t_0}(t_0)(\z-t_0)+o(\z-t_0)
\ee
as $\z$ approaches $t_0$ nontangentially, and 
\be\label{230517-03}
\int\limits_{\bbT}\dfrac{|(\mathcal P^\alpha \Delta_{t_0})(t)-\D_{t_0}(t_0)|^2}{|t-t_0|^2}\mu(dt)
+
\int\limits_{\bbT}\dfrac{1-|(\mathcal P^\alpha \Delta_{t_0})(t)|^2}{|t-t_0|^2}\mu(dt)
=t_0\dfrac{\D_{t_0}'(t_0)}{\D_{t_0}(t_0)}.
\ee
\end{theorem}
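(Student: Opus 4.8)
The plan is to treat the three claims in sequence: first show that $W:=\mathcal P^\a\D_{t_0}$ is a Schur function, then that it reproduces the first-order data of $\D_{t_0}$ at $t_0$, and finally to recognize \eqref{230517-03} as the Carath\'eodory--Julia identity for $W$. Throughout I would exploit that $W$ is automorphic by construction (first form of the series), so only its size and boundary behaviour at $t_0$ remain to be controlled.

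For \eqref{230517-01} I would use the first form of the Poincar\'e series, whose denominator has modulus $|m'_{t_0}(\z)|$ by \eqref{221003_01}. Since $\D_{t_0}$ is character-automorphic, \eqref{230530-03} gives $|\D_{t_0}(\g(\z))|=|\D_{t_0}(\z)|$, which factors out of the numerator and bounds it by $|\D_{t_0}(\z)|\sum_{\g}|\g'(\z)|/|\g(\z)-t_0|^2$. The key step is the sum identity: combining Lemma \ref{230530-08} with the Schwarz--Pick relation $1-|\g(\z)|^2=|\g'(\z)|(1-|\z|^2)$ and reindexing $\g\mapsto\g^{-1}$ (which fixes the identity element) yields
\[
\sum_{\g\in\G}\frac{|\g'(\z)|}{|\g(\z)-t_0|^2}=\sum_{\g\in\G}\frac{|\g'(t_0)|}{|\g(t_0)-\z|^2}.
\]
Corollary \ref{230514-02} bounds the right-hand side by $|m'_{t_0}(\z)|/|\D_{t_0}(\z)|$, so the numerator is at most $|m'_{t_0}(\z)|$ in modulus and the ratio is $\le 1$. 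Being meromorphic (a ratio of convergent holomorphic series) and pointwise bounded, $W$ is in fact holomorphic, so $W\in H^\infty$ with $\Vert W\Vert_{H^\infty}\le 1$.

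For \eqref{230517-02} I would isolate the $\g=e$ term, contributing $\D_{t_0}(\z)/(\z-t_0)^2$ to the numerator and $1/(\z-t_0)^2$ to the denominator, and multiply through by $(\z-t_0)^2$. The tails are then bounded in modulus by $|\z-t_0|^2\sum_{\g\ne e}|\g'(t_0)|/|\g(t_0)-\z|^2$ via the same reindexing, and assumption \eqref{221231_01} makes this exactly $o(\z-t_0)$; this is precisely the point where the single power of $|t_0-\z|$ in \eqref{221231_01}, rather than the trivial square (cf. the footnote), is indispensable. Inserting the expansion \eqref{230312-03} of $\D_{t_0}$ into the surviving $\g=e$ term and dividing by a denominator $1+o(\z-t_0)$ gives \eqref{230517-02}.

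Finally, by the first two parts $W$ is a Schur function with $|W_0|=1$, $W_0=\D_{t_0}(t_0)$, and finite angular derivative $W'_0=\D'_{t_0}(t_0)$, so \eqref{230517-03} is the Carath\'eodory--Julia identity for $W$. By \eqref{230606-02} the function $G:=(W-W_0)/(\z-t_0)\in H^2$, and the first integral equals $\Vert G\Vert_{H^2}^2$; adding the two integrands and using $|W_0|=1$ collapses them to $2\,\Re\bigl(\overline{W_0}(W_0-W(t))\bigr)/|t-t_0|^2$. Writing $W_0-W(t)=-(t-t_0)G(t)$ and evaluating the resulting boundary Cauchy integral of $G$ at $t_0$ (a principal-value computation giving $\tfrac12 G(t_0)=\tfrac12 W'_0$) produces $\Re\bigl(t_0 W'_0/W_0\bigr)$, which equals $t_0 W'_0/W_0$ by the positivity in Theorem \ref{230601-03}. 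I expect Step 2 to be the main obstacle: controlling the tail to order $o(\z-t_0)$, not merely $o(1)$, is exactly what \eqref{221231_01} is designed to supply, and the nontangential bookkeeping must be carried out with care.
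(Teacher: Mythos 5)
Your proposal is correct and follows essentially the same route as the paper: bound $\mathcal P^\a\D_{t_0}$ by $1$ via the character-automorphy of $\D_{t_0}$ together with Corollary \ref{230514-02}, isolate the $\g=e$ term and use \eqref{221231_01} to control the tail to order $o(\z-t_0)$, then invoke the Carath\'eodory--Julia identity. The only cosmetic differences are that you start from the first form of the Poincar\'e series and reindex (the paper starts directly from the second form, the two being identified termwise by Lemma \ref{230605-01}), and that you sketch a derivation of \eqref{230525-01}, which the paper simply cites.
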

\begin{proof}
$$
(\mathcal P^\alpha \Delta_{t_0}) (\z)
=\frac{\sum\limits_{\gamma\in\Gamma}\alpha(\gamma)\D_{t_0}(\g^{-1}(\z))
\dfrac{\gamma'(t_0)}{(\gamma(t_0)-\z)^2}}
{\sum\limits_{\gamma\in\Gamma}\dfrac{\gamma'(t_0)}{(\gamma(t_0)-\z)^2}}
=\D_{t_0}(\z)\frac{\sum\limits_{\gamma\in\Gamma}\alpha(\gamma)\overline{\d_{t_0}(\gamma)}
\dfrac{\gamma'(t_0)}{(\gamma(t_0)-\z)^2}}
{\sum\limits_{\gamma\in\Gamma}\dfrac{\gamma'(t_0)}{(\gamma(t_0)-\z)^2}}
$$
Hence $\mathcal P^\alpha \Delta_{t_0}$ is $\b$ automorphic and
$$
|(\mathcal P^\alpha \Delta_{t_0}) (\z)|\le
|\D_{t_0}(\z)|\dfrac{\sum\limits_{\gamma\in\Gamma}
\dfrac{|\gamma'(t_0)|}{|\gamma(t_0)-\z|^2}}
{\left|\sum\limits_{\gamma\in\Gamma}\dfrac{\gamma'(t_0)}{(\gamma(t_0)-\z)^2}\right|}
=
\dfrac{|\D_{t_0}(\z)|}{|m'_{t_0}(\z)|}\sum\limits_{\gamma\in\Gamma}
\dfrac{|\gamma'(t_0)|}{|\gamma(t_0)-\z|^2}\le 1.
$$
The later inequality is due to Corollary \ref{230514-02}.
Thus, $\mathcal P^\alpha \Delta_{t_0}\in H^\infty(\b)$, $\Vert\mathcal P^\alpha \Delta_{t_0}\Vert_\infty\le 1$.
On the other hand
$$
(\mathcal P^\alpha \Delta_{t_0}) (\z)=\D_{t_0}(\z)\frac{1+(t_0-\z)^2\sum\limits_{\gamma\ne e}\alpha(\gamma)\d_{t_0}(\gamma)
\dfrac{\gamma'(t_0)}{(\gamma(t_0)-\z)^2}}
{1+(t_0-\z)^2\sum\limits_{\gamma\ne e}\dfrac{\gamma'(t_0)}{(\gamma(t_0)-\z)^2}}
$$
due to assumption \eqref{221231_01}
$$
=\D_{t_0}(\z)\frac{1+o(t_0-\z)}
{1+o(t_0-\z)}
$$
$$
=\D_{t_0}(\z)(1+o(\z-t_0))=\D_{t_0}(t_0)+\D'_{t_0}(t_0)(\z-t_0)+o(\z-t_0).
$$
Therefore, $(\mathcal P^\alpha \Delta_{t_0}) (\z)$ has nontangential boundary value $\D_{t_0}(t_0)$ and
$$
\dfrac{(\mathcal P^\alpha \Delta_{t_0}) (\z)-\D_{t_0}(t_0)}{\z-t_0}
$$
has nontangential boundary value $\D'_{t_0}(t_0)$. Hence,
by Carath\' eodory - Julia theorem (Theorem \ref{ACJ-03sep01} in Appendix),
$$
\int\limits_{\bbT}\dfrac{|(\mathcal P^\alpha \Delta_{t_0})(t)-\D_{t_0}(t_0)|^2}{|t-t_0|^2}\mu(dt)
+
\int\limits_{\bbT}\dfrac{1-|(\mathcal P^\alpha \Delta_{t_0})(t)|^2}{|t-t_0|^2}\mu(dt)
=t_0\dfrac{\D_{t_0}'(t_0)}{\D_{t_0}(t_0)}.
$$
\end{proof}
\begin{corollary}\label{230517-04}
Under assumptions of Theorem \ref{221231_04},
for every character $\a$ there exists an
$\a$ automorphic function $g$ such that
\be\label{211115_13}
g\in 1+(t-t_0)H^2.
\ee
\end{corollary}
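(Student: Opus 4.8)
The plan is to renormalize the bounded $\a$ automorphic function furnished by Theorem \ref{221231_04} so that its boundary value at $t_0$ becomes $1$, and then to extract the required $H^2$-membership from the Carath\'eodory--Julia theorem. Fix a character $\a$ and put
$$
g:=\overline{\D_{t_0}(t_0)}\,\mathcal P^\a \D_{t_0}.
$$
Because $|\D_{t_0}(t_0)|=1$, multiplication by the unimodular constant $\overline{\D_{t_0}(t_0)}$ alters neither the character nor the sup-norm bound, so \eqref{230517-01} gives that $g$ is $\a$ automorphic, $g\in H^\infty$ and $\Vert g\Vert_\infty\le 1$. Multiplying the expansion \eqref{230517-02} by $\overline{\D_{t_0}(t_0)}$ yields $g(\z)=1+\overline{\D_{t_0}(t_0)}\D'_{t_0}(t_0)(\z-t_0)+o(\z-t_0)$ as $\z\to t_0$ nontangentially; thus $g$ has the unimodular nontangential boundary value $g(t_0)=1$ and a finite angular derivative at $t_0$.

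These are precisely the hypotheses of the Carath\'eodory--Julia theorem (Theorem \ref{230601-03}) for $g$ with $w_0=1$, so \eqref{230606-02} applied to $g$ gives $\frac{g-1}{t-t_0}\in H^2$; writing $h:=\frac{g-1}{t-t_0}$ we obtain $g=1+(t-t_0)h$ with $h\in H^2$, which is exactly \eqref{211115_13}. One can also bypass \eqref{230606-02} and argue from the quantitative identity \eqref{230517-03}: its right-hand side $t_0\D'_{t_0}(t_0)/\D_{t_0}(t_0)$ is finite and nonnegative and both integrands on the left are nonnegative, so the first integral is finite; since $|(\mathcal P^\a\D_{t_0})(t)-\D_{t_0}(t_0)|=|g(t)-1|$, this reads $\int_{\bbT}\frac{|g(t)-1|^2}{|t-t_0|^2}\mu(dt)<\infty$, i.e. $h\in L^2(\bbT)$.

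The one genuinely delicate step is the passage from this $L^2$ boundary estimate to $H^2$-membership, since an analytic function on $\bbD$ with square-summable boundary values need not lie in $H^2$. I would settle it by a Smirnov-class argument, which is also the mechanism underlying \eqref{230606-02}: the function $h=(g-1)/(\z-t_0)$ is analytic on $\bbD$, its only possible singularity $t_0$ lying on $\bbT$; since $g-1\in H^\infty\subset N^+$ and $\z-t_0$ is outer, the quotient $h$ belongs to the Smirnov class $N^+$, and a Smirnov-class function whose boundary values are in $L^2$ is automatically in $H^2$. Hence $h\in H^2$ and $g=1+(t-t_0)h$ is the desired $\a$ automorphic function; everything other than this last step is the routine renormalization described above.
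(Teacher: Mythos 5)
Your proof is correct and takes essentially the same route as the paper's: both obtain $\dfrac{(\mathcal P^\a\D_{t_0})(t)-\D_{t_0}(t_0)}{t-t_0}\in H^2$ from the Carath\'eodory--Julia conclusions of Theorem \ref{221231_04} (the paper reads this off from \eqref{230517-03}, which rests on the same appendix fact \eqref{230606-01}/\eqref{230606-02} you invoke) and then multiply by the unimodular constant $\overline{\D_{t_0}(t_0)}$. Your closing Smirnov-class argument is a sound justification of the $L^2\Rightarrow H^2$ upgrade, but it is precisely the mechanism already packaged into \eqref{230606-01}, so it does not constitute a different approach.
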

\begin{proof}
Indeed, formula \eqref{230517-03} yields, in particular, that for every character $\a$
$$
\dfrac{(\mathcal P^\alpha \Delta_{t_0})(t)-\D_{t_0}(t_0)}{t-t_0}\in H^2.
$$
Hence
$$
\dfrac{(\mathcal P^\alpha \Delta_{t_0})(t)\overline{\D_{t_0}(t_0)}-1}{t-t_0}\in H^2.
$$
That is
$
g(t)=(\mathcal P^\alpha \Delta_{t_0})(t)\overline{\D_{t_0}(t_0)}
$
is the requisite function.
\end{proof}
\begin{remark}\label{230623-01}
Let $|t_0|=1$. Then
for every choice of numbers $w_0$, $w'_0$ such that $|w_0|=1$,
\be\label{230623-04}
t_0\dfrac{w'_0}{w_0}\ge
t_0\dfrac{\D_{t_0}'(t_0)}{\D_{t_0}(t_0)}
\ee
and for every character $\a$ there exists an $\a$ automorphic analytic on $\bbD$ function $w(\z)$,
$|w(\z)|\le 1$, such that
\be\label{230623-05}
w(\z)=w_0+w'_0(\z-t_0)+o(\z-t_0)
\ee
as $\z$ approaches $t_0$ nontangentially.
\end{remark}
\begin{proof}
Let $f(\z)$ be a function analytic on $\bbD$, $|f(\z)|\le 1$.  Assume that
\be\label{230623-02}
f(\z)=f_0+f'_0(\z-t_0)+o(\z-t_0)
\ee
as $\z$ approaches $t_0$ nontangentially. Then for every character $\a$
\be\label{230623-03}
w(\z):=
(\mathcal P^\alpha \Delta_{t_0}f) (\z)=\D_{t_0}(t_0)f_0+(\D'_{t_0}(t_0)f_0+\D_{t_0}f'_0)(\z-t_0)+o(\z-t_0).
\ee
Hence,
$$
w_0=\D_{t_0}(t_0)f_0, \quad w'_0=\D'_{t_0}(t_0)f_0+\D_{t_0}f'_0
$$
and
\be\label{230623-07}
t_0\dfrac{w'_0}{w_0}
=t_0\dfrac{\D_{t_0}'(t_0)}{\D_{t_0}(t_0)}
+
t_0\dfrac{f'_0}{f_0}.
\ee
Also $w(\z)$ is analytic and bounded in modulus by $1$. It remains to select an $f$, $|f(\z)\le 1$, for given $w_0$ and $w'_0$.
It is possible due to \eqref{230623-04}.
\end{proof}
One can use this normalization $\D_{t_0}(t_0)=1$. In this case property \eqref{230312-03} reads as
\be\label{230530-07}
\D_{t_0}(\z)=1+ \D'_{t_0}(t_0)(\z-t_0)+o(\z-t_0),\quad |\D_{t_0}(t_0)|=1,
\ee
\begin{remark}
Properties \eqref{230530-07} 
and \eqref{221231_01} imply that
\be\label{230530-06}
\dfrac{\varphi(\z)}{(\z-t_0)^2}= 1+\D'_{t_0}(t_0)(\z-t_0)+o(\z-t_0),
\ee
as $\z$ approaches $t_0$ nontangentially.
\end{remark}
\begin{proof}
Since
$$
\varphi m'_{t_0}=\D_{t_0},
$$
we have
$$
\dfrac{\varphi(\z)}{(\z-t_0)^2}
\left(
1+(\z-t_0)^2
it_0\sum\limits_{\gamma\ne e}\dfrac{\gamma'(t_0)}{(\gamma(t_0)-\zeta)^2}
\right)
=1+\D'_{t_0}(t_0)(\z-t_0)+o(\z-t_0).
$$
Using \eqref{221231_01}, we get
$$
\dfrac{\varphi(\z)}{(\z-t_0)^2}
(1+o(\z-t_0))
=1+\D'_{t_0}(t_0)(\z-t_0)+o(\z-t_0).
$$
From here we get \eqref{230530-06}.
\end{proof}
The author does not have a necessary condition of the property that
for every character $\a$ there exist
$\a$ automorphic functions in $1+(t-t_0)H^2$.

\section{Kernels $k^\a_{t_0}$ and Their Properties.}\label{230606-06}
\begin{theorem}\label{220105_01}
Under assumptions \eqref{230312-03} and \eqref{221231_01} for every character $\a$ the minimum
of $\ \left\Vert\dfrac{g(t)-1}{t-t_0}\right\Vert_{H^2}$ over $\a$ automorphic $g$ is uniquely attained.
\end{theorem}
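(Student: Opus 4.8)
The plan is to recognize this as a nearest-point problem in the Hilbert space $H^2$ and to reduce the claim to the closedness of an affine subspace. Setting $h=\dfrac{g-1}{t-t_0}$, an $\a$ automorphic function $g$ with $\dfrac{g-1}{t-t_0}\in H^2$ corresponds bijectively to an element of
$$
W_\a:=\left\{h\in H^2:\ 1+(t-t_0)h \text{ is } \a\text{ automorphic}\right\},
$$
and the functional to be minimized is simply $\|h\|_{H^2}$. By Corollary \ref{230517-04} the set $W_\a$ is nonempty. First I would check that $W_\a$ is an affine subspace: if $g_1,g_2$ are admissible, then $g_1-g_2=(t-t_0)(h_1-h_2)$ is $\a$ automorphic by linearity, so $W_\a=h_0+L_\a$, where $h_0$ is any fixed element of $W_\a$ and
$$
L_\a:=\left\{h\in H^2:\ (t-t_0)h \text{ is } \a\text{ automorphic}\right\}
$$
is a linear subspace.

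The key step is to show that $L_\a$, and hence $W_\a$, is closed in $H^2$. Here I would use that convergence in $H^2$ forces pointwise convergence on $\bbD$: from the reproducing-kernel bound $|f(\z)|\le \|f\|_{H^2}/\sqrt{1-|\z|^2}$ one obtains uniform convergence on compact subsets of $\bbD$. Thus, if $h_n\to h$ in $H^2$ with each $(t-t_0)h_n$ satisfying $(\g(\z)-t_0)h_n(\g(\z))=\a(\g)(\z-t_0)h_n(\z)$ for all $\g\in\G$ and all $\z\in\bbD$, then for each fixed $\g$ and $\z$ (noting $\g(\z)\in\bbD$) I can pass to the pointwise limit in $n$ to get $(\g(\z)-t_0)h(\g(\z))=\a(\g)(\z-t_0)h(\z)$. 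Hence $(t-t_0)h$ is again $\a$ automorphic, so $h\in L_\a$ and $L_\a$ is closed.

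Once $W_\a$ is known to be a nonempty closed affine (in particular convex) subset of the Hilbert space $H^2$, the conclusion follows from the projection theorem: there exists a unique element of $W_\a$ of least norm, namely the orthogonal projection of $0$ onto $W_\a$, which gives both existence and uniqueness of the minimizer. Uniqueness is forced by the strict convexity of the $H^2$ norm: two distinct minimizers $h_1\ne h_2$ of equal norm would, by the parallelogram law, yield a midpoint $\tfrac12(h_1+h_2)\in W_\a$ of strictly smaller norm, a contradiction. Translating back through $g=1+(t-t_0)h$ produces the unique extremal $\a$ automorphic function.

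I expect the only point requiring genuine care to be the closedness of $L_\a$; the rest is the standard extremal/projection machinery. The passage to the limit is clean because the automorphy condition is a family of pointwise identities between analytic functions indexed by $\g\in\G$, and pointwise convergence on $\bbD$ preserves each identity separately.
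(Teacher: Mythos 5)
Your proof is correct and follows essentially the same route as the paper: reduce the problem to the distance from $0$ to a nonempty, convex, closed subset of $H^2$ (nonemptiness coming from Theorem \ref{221231_04} and Corollary \ref{230517-04}) and invoke the standard Hilbert-space nearest-point property. The only difference is that you explicitly verify closedness via pointwise convergence under the reproducing-kernel bound, a detail the paper merely asserts.
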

\begin{proof}
By Theorem \ref{221231_04}, there exist  $\a$ automorphic 
functions $g$ that satisfy
\be\label{211115_03}
\dfrac{g-1}{t-t_0}=h\in H^2.
\ee
Equivalently
\be\label{230312-04}
g=1+(t-t_0)h,\quad h\in H^2.
\ee
The set of functions $h$ in $H^2$, such that the corresponding function $g$ defined by \eqref{230312-04} is $\a$
automorphic, is convex and closed. The set is nonempty as stated above.
The assertion now is a standard Hilbert space property applied to the distance from $0$ to this convex set.
\end{proof}
\begin{definition}
We denote by $k^\a_{t_0}$ the unique function that gives {\it minimum} to the integral
\be\label{210916_03}
\int\limits_{\bbT}\dfrac{|g(t)-1|^2}{|t-t_0|^2}\mu(dt),\quad
\dfrac{g(t)-1}{t-t_0}\in H^2,
\ee
$g$ is $\a$ automorphic.
\end{definition}


\begin{theorem}\label{211115_01}
$k^\a_{t_0}$ is uniquely determined by these conditions: $g$ is
$\a$ automorphic,
\be\label{210916_01'}
\dfrac{g-1}{t-t_0}\in H^2.
\ee
and
\be\label{210916_02}
\int\limits_{\bbT}\dfrac{(\overline{g}-1)f}{|t-t_0|^2}\mu(dt)=0
\ee
for every $\a$ automorphic $f$ such that $\dfrac{f}{t-t_0}\in H^2$.
\end{theorem}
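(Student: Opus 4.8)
The plan is to recognize Theorem \ref{211115_01} as the standard first-order (variational) characterization of the minimum-norm element of an affine subspace of $H^2$, translated into the concrete integral condition \eqref{210916_02}.

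First I would pass to the variable $h:=\frac{g-1}{t-t_0}\in H^2$, so that $g=1+(t-t_0)h$ and the integral \eqref{210916_03} equals $\|h\|_{H^2}^2$. Let $K\subset H^2$ be the set of those $h$ for which $1+(t-t_0)h$ is $\a$-automorphic; by Corollary \ref{230517-04} it is nonempty, and, as recorded in the proof of Theorem \ref{220105_01}, it is convex and closed. The structural point is that $K$ is an affine subspace. Indeed, if $h_1,h_2\in K$ with corresponding functions $g_1,g_2$, then $g_1-g_2$ is $\a$-automorphic and $\frac{g_1-g_2}{t-t_0}=h_1-h_2\in H^2$; conversely, adding to any $h\in K$ a function $\frac{f}{t-t_0}$ with $f$ $\a$-automorphic and $\frac{f}{t-t_0}\in H^2$ again lands in $K$. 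Hence $K=h^*+M$ with $h^*:=\frac{k^\a_{t_0}-1}{t-t_0}$ and $M:=\{\frac{f}{t-t_0}\in H^2 : f\ \a\text{-automorphic}\}$, and this $M$ is exactly the complex-linear space of test directions occurring in \eqref{210916_02}.

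Next I would produce \eqref{210916_02} from minimality. For an admissible $f$ the competitor $k^\a_{t_0}+\e f$ lies in the same automorphic class and still satisfies $\frac{(k^\a_{t_0}+\e f)-1}{t-t_0}\in H^2$, so $\e\mapsto\int_{\bbT}\frac{|k^\a_{t_0}-1+\e f|^2}{|t-t_0|^2}\mu(dt)$ is minimized at $\e=0$; differentiating gives $\Re\int_{\bbT}\frac{(\overline{k^\a_{t_0}}-1)f}{|t-t_0|^2}\mu(dt)=0$, and replacing $f$ by $if$ (legitimate since $M$ is complex-linear) removes the imaginary part as well, yielding \eqref{210916_02}. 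Conversely, any $g$ meeting all three listed conditions is the minimizer, because \eqref{210916_02} says precisely that $h=\frac{g-1}{t-t_0}$ is orthogonal to $M$; since $h\in K=h^*+M$, the Pythagorean identity $\|h+m\|^2=\|h\|^2+\|m\|^2$ for $m\in M$ then forces $\|h\|$ to be minimal over $K$.

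Finally, for uniqueness I would argue straight from the conditions: if $g$ and $\tilde g$ both satisfy $\a$-automorphy, \eqref{210916_01'}, and \eqref{210916_02}, then $f:=g-\tilde g$ is itself an admissible test function, so applying \eqref{210916_02} to $g$ and to $\tilde g$ and subtracting gives $\int_{\bbT}\frac{|g-\tilde g|^2}{|t-t_0|^2}\mu(dt)=0$, whence $g=\tilde g$. I expect no real obstacle here: the whole statement repackages the Hilbert-space projection theorem, and the only item demanding care is the inner-product bookkeeping, namely checking that \eqref{210916_02} is the complex conjugate of $\langle h, \frac{f}{t-t_0}\rangle_{H^2}=0$, so that the concrete and abstract orthogonality relations coincide.
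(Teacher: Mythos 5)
Your proposal is correct and follows essentially the same route as the paper: the orthogonality relation \eqref{210916_02} is derived from minimality by a first-order perturbation of $k^\a_{t_0}$ along an admissible direction $f$ (the paper uses $k^\a_{t_0}-cf$ with small $c>0$ after normalizing the integral to be positive, which is equivalent to your differentiate-and-use-$if$ argument), and uniqueness is obtained exactly as you do, by testing \eqref{210916_02} with $f=g_1-g_2$ and subtracting. Your additional framing via the affine subspace $K=h^*+M$ and the Pythagorean converse is a harmless elaboration of the same Hilbert-space projection argument.
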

\begin{proof}
Check first that $k^\a_{t_0}$ satisfies \eqref{210916_02}. Assume for the sake of
contradiction that \eqref{210916_02} fails. That is,
that there exists $\a$ automorphic $f$ such that $\dfrac{f}{t-t_0}\in H^2$ and
\be\label{210916_04}
\int\limits_{\bbT}\dfrac{(\overline{k^\a_{t_0}}-1)f}{|t-t_0|^2}\mu(dt)>0.
\ee
Let $c>0$. Consider
\be\label{210916_05}
\int\limits_{\bbT}\dfrac{|k^\a_{t_0}-cf-1|^2}{|t-t_0|^2}\mu(dt)
=\int\limits_{\bbT}\dfrac{|k^\a_{t_0}-1|^2-c(\overline{k^\a_{t_0}}-1)f-c({k^\a_{t_0}}-1)\overline f+c^2|f|^2}
{|t-t_0|^2}\mu(dt).
\ee
By choosing sufficiently small $c$, one can see that $k^\a_{t_0}$ is not an extremum. Contradiction. Therefore,
\eqref{210916_02} holds.

We show now that $\a$ automorphic function $g$ that satisfies \eqref{210916_01'} and \eqref{210916_02} is unique. Indeed, if $g_1$ and $g_2$ are such functions, then their difference is $\a$ automorphic and
$$
\dfrac{g_1-g_2}{t-t_0}\in H^2.
$$
Therefore,
$$
\int\limits_{\bbT}\dfrac{(\overline{g_1}-1)(g_1-g_2)}{|t-t_0|^2}\mu(dt)=0
$$
and
$$
\int\limits_{\bbT}\dfrac{(\overline{g_2}-1)(g_1-g_2)}{|t-t_0|^2}\mu(dt)=0.
$$
This implies
$$
\int\limits_{\bbT}\dfrac{(\overline{g_1}-\overline{g_2})(g_1-g_2)}{|t-t_0|^2}\mu(dt)=0.
$$
Theorem follows.
\end{proof}

\section{Automorphic Carath\' eodory-Julia: Necessary Condition.}

\begin{theorem}\label{230606-07}
Let $w$ be a $\b$-automorphic analytic function on the unit disk, $|w(\zeta)|\le 1$. Let $t_0$ be a point on the unit circle, $|t_0|=1$.
Assume that $w$ and $w'$ have
nontangential boundary values $w_0$, $|w_0|=1$ and $w'_0$, respectively, at this point $t_0$. Then for every character $\a$
\be\label{210915_02}
t_0\dfrac{w'_0}{w_0}
\ge
\int\limits_{\bbT}\dfrac{|k^{\a\b}_{t_0}-1|^2}{|t-t_0|^2}\mu(dt)
-\int\limits_{\bbT}\dfrac{|k^\a_{t_0}-1|^2}{|t-t_0|^2}\mu(dt).
\ee
\end{theorem}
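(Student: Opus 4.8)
The plan is to use $w$ to transport the extremal kernel $k^\a_{t_0}$ into a competitor for the extremal problem defining $k^{\a\b}_{t_0}$, and then to read off \eqref{210915_02} from the minimality of $k^{\a\b}_{t_0}$ together with the classical Carath\'eodory--Julia identity for $w$. Write $W=w/w_0$, so that $|W|=|w|\le 1$ on $\bbT$ (here $|w_0|=1$ is used), $W\in H^\infty$ with $\|W\|_\infty\le 1$, $W$ is $\b$-automorphic, $W$ has nontangential boundary value $1$ at $t_0$ and derivative $w'_0/w_0$ there, and $\frac{W-1}{t-t_0}\in H^2$ by \eqref{230606-02}. Set $g=W\,k^\a_{t_0}$. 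Since the product of a $\b$-automorphic and an $\a$-automorphic function is $\a\b$-automorphic, $g$ is $\a\b$-automorphic with boundary value $1$ at $t_0$, and from
\[
\frac{g-1}{t-t_0}=W\,\frac{k^\a_{t_0}-1}{t-t_0}+\frac{W-1}{t-t_0}
\]
together with $W\in H^\infty$ and $\frac{k^\a_{t_0}-1}{t-t_0}\in H^2$ one sees $\frac{g-1}{t-t_0}\in H^2$. Thus $g$ is admissible in the minimization \eqref{210916_03} for the character $\a\b$ (both kernels exist for every character by Theorem \ref{220105_01}), and minimality of $k^{\a\b}_{t_0}$ gives $\int_{\bbT}\frac{|k^{\a\b}_{t_0}-1|^2}{|t-t_0|^2}\mu(dt)\le \int_{\bbT}\frac{|W k^\a_{t_0}-1|^2}{|t-t_0|^2}\mu(dt)$.

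Next I would compute the right-hand side by comparison with $k^\a_{t_0}$. The pointwise identity on $\bbT$
\[
|W k^\a_{t_0}-1|^2-|k^\a_{t_0}-1|^2=-(1-|W|^2)|k^\a_{t_0}|^2-2\Re\big((W-1)k^\a_{t_0}\big)
\]
splits the excess energy into three pieces after dividing by $|t-t_0|^2$ and integrating. The first, $-\int_{\bbT}\frac{(1-|W|^2)|k^\a_{t_0}|^2}{|t-t_0|^2}\mu(dt)$, is $\le 0$ since $|W|\le 1$, and may be discarded. For the cross term I would write $k^\a_{t_0}=(k^\a_{t_0}-1)+1$. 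The contribution of $k^\a_{t_0}-1$ \emph{vanishes}: with $v=\frac{W-1}{t-t_0}\in H^2$ and $h=\frac{k^\a_{t_0}-1}{t-t_0}\in H^2$, the elementary identity $\frac{(t-t_0)^2}{|t-t_0|^2}=-t\,t_0$ on $\bbT$ turns $\int_{\bbT}\frac{(W-1)(k^\a_{t_0}-1)}{|t-t_0|^2}\mu(dt)$ into $-t_0\int_{\bbT} t\,vh\,\mu(dt)$, and since $vh\in H^1$ its $(-1)$st Fourier coefficient is $0$.

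The remaining piece carries the value $1$, namely $-2\Re\int_{\bbT}\frac{W-1}{|t-t_0|^2}\mu(dt)$, and here I would invoke Carath\'eodory--Julia. From $-2\Re(W-1)=|W-1|^2+(1-|W|^2)$ on $\bbT$,
\[
-2\Re\int\limits_{\bbT}\frac{W-1}{|t-t_0|^2}\mu(dt)=\int\limits_{\bbT}\frac{|W-1|^2}{|t-t_0|^2}\mu(dt)+\int\limits_{\bbT}\frac{1-|W|^2}{|t-t_0|^2}\mu(dt)=t_0\frac{w'_0}{w_0},
\]
the last equality being the Carath\'eodory--Julia identity (Theorem \ref{ACJ-03sep01} of the Appendix) applied to $W$, whose boundary value at $t_0$ is $1$ and whose derivative there is $w'_0/w_0$. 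Combining the three pieces,
\[
\int\limits_{\bbT}\frac{|W k^\a_{t_0}-1|^2}{|t-t_0|^2}\mu(dt)=\int\limits_{\bbT}\frac{|k^\a_{t_0}-1|^2}{|t-t_0|^2}\mu(dt)+t_0\frac{w'_0}{w_0}-\int\limits_{\bbT}\frac{(1-|W|^2)|k^\a_{t_0}|^2}{|t-t_0|^2}\mu(dt),
\]
and dropping the last (nonpositive) term together with the minimality inequality yields exactly \eqref{210915_02}.

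The genuinely delicate point is the vanishing of the cross contribution of $k^\a_{t_0}-1$: it is what makes the two extremal energies subtract cleanly, rather than leaving an uncontrolled interaction between $w$ and $k^\a_{t_0}$. Everything rests on $vh$ lying in $H^1$ (a product of two $H^2$ functions), so its negative Fourier coefficients vanish; this is precisely where the admissibility of $g$, i.e. $\frac{W-1}{t-t_0}\in H^2$ and $\frac{k^\a_{t_0}-1}{t-t_0}\in H^2$, is used. I would also take care to justify that all the boundary integrals converge — guaranteed by \eqref{230606-02} and the Carath\'eodory--Julia integrability of the Appendix — and that the nontangential derivative of $W$ at $t_0$ is $w'_0/w_0$, so that the Carath\'eodory--Julia identity may legitimately be applied to $W$. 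Notably, the orthogonality characterization of Theorem \ref{211115_01} is not needed; only the minimality of $k^{\a\b}_{t_0}$ enters.
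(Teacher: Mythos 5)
Your proof is correct, and it takes a genuinely different route from the paper's. The paper evaluates the manifestly nonnegative quadratic form
$\int_{\bbT}\bigl[\,\overline{k^{\a\b}_{t_0}},\ -w_0\overline{k^{\a}_{t_0}}\,\bigr]
\left[\begin{smallmatrix}1 & w\\ \overline w & 1\end{smallmatrix}\right]
\bigl[\begin{smallmatrix}k^{\a\b}_{t_0}\\ -\overline{w_0}k^{\a}_{t_0}\end{smallmatrix}\bigr]\frac{\mu(dt)}{|t-t_0|^2}$,
which involves \emph{both} kernels from the start, and reduces the cross terms using the orthogonality characterization of Theorem \ref{211115_01} (applied to the $\a\b$-automorphic function $k^{\a\b}_{t_0}-w\overline{w_0}k^{\a}_{t_0}$) together with the same $H^1$ Fourier-coefficient argument you use; the outcome is the exact identity \eqref{210915_01}, of which \eqref{210915_02} is the positivity consequence. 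You instead transport $k^{\a}_{t_0}$ by $W=w/w_0$ to manufacture a competitor $Wk^{\a}_{t_0}$ for the $\a\b$-extremal problem and invoke only the \emph{minimality} of $k^{\a\b}_{t_0}$, never its orthogonality relations. The two arguments are linked: since $[\,\overline a,\overline b\,]\left[\begin{smallmatrix}1 & w\\ \overline w & 1\end{smallmatrix}\right]\bigl[\begin{smallmatrix}a\\ b\end{smallmatrix}\bigr]=|a+wb|^2+(1-|w|^2)|b|^2$, the paper's integrand is exactly $|k^{\a\b}_{t_0}-Wk^{\a}_{t_0}|^2+(1-|W|^2)|k^{\a}_{t_0}|^2$ over $|t-t_0|^2$, and the Pythagorean identity for the minimizer converts your inequality into the paper's identity. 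What your version buys is economy (no Theorem \ref{211115_01}, and in fact the bound holds with any admissible $\a$-automorphic $g\in 1+(t-t_0)H^2$ in place of $k^{\a}_{t_0}$, the stated case being the sharpest); what the paper's version buys is the quantitative defect formula \eqref{210915_01}. Your handling of the delicate points is sound: the cross term $\int_{\bbT}(W-1)(k^{\a}_{t_0}-1)|t-t_0|^{-2}\mu(dt)=-t_0\int_{\bbT}t\,vh\,\mu(dt)=0$ because $vh\in H^1$, the identity $-2\Re(W-1)=|W-1|^2+(1-|W|^2)$ feeds the Carath\'eodory--Julia formula \eqref{230525-01} for $W$ (whose angular derivative at $t_0$ is indeed $w'_0/w_0$ with boundary value $1$), and the discarded term $-\int_{\bbT}(1-|W|^2)|k^{\a}_{t_0}|^2|t-t_0|^{-2}\mu(dt)$ is nonpositive; its finiteness, if you want the displayed equality rather than just the inequality, follows a posteriori since every other term in the pointwise decomposition is integrable.
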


\begin{proof}
$$
\int\limits_{\bbT}
\begin{bmatrix} \overline{k^{\a\b}_{t_0}} & -\overline{k^\a_{t_0}}w(t_0)  \end{bmatrix}
\begin{bmatrix} 1 & w \\ \overline w & 1\end{bmatrix}
\begin{bmatrix} k^{\a\b}_{t_0} \\ \\ -\overline{w(t_0)}k^\a_{t_0} \end{bmatrix}
\frac{1}{|t-t_0|^2}\mu(dt)
$$
$$ 
=\int\limits_{\bbT}
\dfrac{
|k^{\a\b}_{t_0}|^2-\overline{k^\a_{t_0}}w(t_0)\overline w k^{\a\b}_{t_0}
- \overline{k^{\a\b}_{t_0}} w\overline{w(t_0)}k^\a_{t_0}
+|k^\a_{t_0}|^2}
{|t-t_0|^2}\mu(dt)
$$ 
$$
=\int\limits_{\bbT}
\dfrac{
(\overline{k^{\a\b}_{t_0}}-w(t_0)\overline w \overline{k^\a_{t_0}}) k^{\a\b}_{t_0}
+ \overline{k^{\a\b}_{t_0}}(k^{\a\b}_{t_0} - w\overline{w(t_0)}k^\a_{t_0})
+|k^\a_{t_0}|^2-|k^{\a\b}_{t_0}|^2}
{|t-t_0|^2}\mu(dt)
$$
since
$$
k^{\a\b}_{t_0} - w\overline{w(t_0)}k^\a_{t_0}
=  k^{\a\b}_{t_0}-1+1 -k^\a_{t_0} +(1 - w\overline{w(t_0)})k^\a_{t_0}
$$
$$
=  (k^{\a\b}_{t_0}-1)+(1 -k^\a_{t_0}) +(1 - w\overline{w(t_0)})(k^\a_{t_0}-1)
+(1 - w\overline{w(t_0)})
$$
is $\a\b$ automorphic and after dividing by $t-t_0$ is in $H^2$ (in the third term we use boundedness of $w$)
and in view of Theorem \ref{211115_01}
$$
=\int\limits_{\bbT}
\dfrac{
(\overline{k^{\a\b}_{t_0}}-w(t_0)\overline w \overline{k^\a_{t_0}})
+ (k^{\a\b}_{t_0} - w\overline{w(t_0)}k^\a_{t_0})
+|k^\a_{t_0}|^2-|k^{\a\b}_{t_0}|^2}
{|t-t_0|^2}\mu(dt)
$$
$$
=\int\limits_{\bbT}
\dfrac{
(\overline{k^{\a}_{t_0}}-w(t_0)\overline w \overline{k^\a_{t_0}})
+ (k^{\a}_{t_0} - w\overline{w(t_0)}k^\a_{t_0})
+|k^\a_{t_0}|^2-k^{\a}_{t_0}-\overline{k^{\a}_{t_0}}+1
-1-|k^{\a\b}_{t_0}|^2+k^{\a\b}_{t_0}+\overline{k^{\a\b}_{t_0}}}
{|t-t_0|^2}\mu(dt)
$$
since
$$
\dfrac{(1 - w\overline{w(t_0)})(k^\a_{t_0}-1)}
{(t-t_0)^2}\in H^1,
$$
$$
=\int\limits_{\bbT}
\dfrac{
(1-w(t_0)\overline w )
+ (1 - w\overline{w(t_0)})
+|k^\a_{t_0}-1|^2
-|k^{\a\b}_{t_0}-1|^2}
{|t-t_0|^2}\mu(dt).
$$
Thus,
$$
\int\limits_{\bbT}
\begin{bmatrix} \overline{k^{\a\b}_{t_0}} & -w(t_0)\overline{k^{\a}_{t_0}}  \end{bmatrix}
\begin{bmatrix} 1 & w \\ \overline w & 1\end{bmatrix}
\begin{bmatrix} k^{\a\b}_{t_0} \\ \\ -\overline{w(t_0)}k^{\a}_{t_0} \end{bmatrix}
\frac{1}{|t-t_0|^2} \mu(dt)
$$
\be\label{210915_01}
=t_0\dfrac{w'(t_0)}{w(t_0)}
+\int\limits_{\bbT}\dfrac{|k^\a_{t_0}-1|^2}{|t-t_0|^2}\mu(dt)
-\int\limits_{\bbT}\dfrac{|k^{\a\b}_{t_0}-1|^2}{|t-t_0|^2}\mu(dt).
\ee
Since the left hand side is nonnegative, we get
\be\label{210915_02'}
t_0\dfrac{w'(t_0)}{w(t_0)}
\ge
\int\limits_{\bbT}\dfrac{|k^{\a\b}_{t_0}-1|^2}{|t-t_0|^2}\mu(dt)
-\int\limits_{\bbT}\dfrac{|k^\a_{t_0}-1|^2}{|t-t_0|^2}\mu(dt).
\ee
\end{proof}
\begin{remark}
The author hopes that the converse is also true. That is, if $|t_0|=1$ and if numbers $w_0$, $|w_0|=1$ and $w'_0$
satisfy condition \eqref{210915_02} for every character $\a$, then there exists a
$\b$-automorphic analytic on the unit disk function $w(\z)$, $|w(\zeta)|\le 1$ such that $w$ and $w'$ have
nontangential boundary values $w_0$ and $w'_0$, respectively, at this point $t_0$. However, the author does
not have a proof of that. Probably some additional assumptions about group $\G$ are needed.
\end{remark}

\section{Additional Considerations.}

Inequality \eqref{210915_02} can be stated as
\be\label{210915_02''}
t_0\dfrac{w'_0}{w_0}
\ge
\underset{\a\in\G^*}{\sup}
\left(
\int\limits_{\bbT}\dfrac{|k^{\a\b}_{t_0}-1|^2}{|t-t_0|^2}\mu(dt)
-\int\limits_{\bbT}\dfrac{|k^\a_{t_0}-1|^2}{|t-t_0|^2}\mu(dt)
\right).
\ee
Assume now that for every character $\b$ there exists a
$\b$-automorphic analytic on the unit disk function $w$, $|w(\zeta)|\le 1$ such that $w$ and $w'$ have
nontangential boundary values $w_0$ and $w'_0$, respectively, at a point $t_0$, $|t_0|=1$. Then
\be\label{210915_02'''}
t_0\dfrac{w'_0}{w_0}
\ge
\underset{\b\in\G^*}{\sup}
\underset{\a\in\G^*}{\sup}
\left(
\int\limits_{\bbT}\dfrac{|k^{\a\b}_{t_0}-1|^2}{|t-t_0|^2}\mu(dt)
-\int\limits_{\bbT}\dfrac{|k^\a_{t_0}-1|^2}{|t-t_0|^2}\mu(dt)
\right)
\ee
$$
=
\underset{\a\in\G^*}{\sup}
\int\limits_{\bbT}\dfrac{|k^{\a}_{t_0}-1|^2}{|t-t_0|^2}\mu(dt)
-
\underset{\a\in\G^*}{\inf}
\int\limits_{\bbT}\dfrac{|k^\a_{t_0}-1|^2}{|t-t_0|^2}\mu(dt)
$$
$$
=
\underset{\a\in\G^*}{\sup}
\int\limits_{\bbT}\dfrac{|k^{\a}_{t_0}-1|^2}{|t-t_0|^2}\mu(dt).
$$
The $\underset{\a\in\G^*}{\inf}$ is equal to $0$ since $k^\a_{t_0}=1$
for the identity character $\a$. Our goal here is to esimate the later $\underset{\a\in\G^*}{\sup}$.
\begin{theorem}
For every character $\a$
\be\label{230517-05}
\int\limits_{\bbT}\dfrac{|k^{\a}_{t_0}-1|^2}{|t-t_0|^2}\mu(dt)
\le
t_0\dfrac{\D_{t_0}'(t_0)}{\D_{t_0}(t_0)}.
\ee
\end{theorem}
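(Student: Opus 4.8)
The plan is to exploit that $k^\a_{t_0}$ is by definition the \emph{minimizer} of the functional $g\mapsto\int_{\bbT}|g-1|^2/|t-t_0|^2\,\mu(dt)$ over the admissible class of $\a$-automorphic functions in $1+(t-t_0)H^2$. Consequently it suffices to exhibit a single competitor $g$ in that class whose value of the functional is at most $t_0\D'_{t_0}(t_0)/\D_{t_0}(t_0)$, and the natural candidate has already been manufactured in Theorem \ref{221231_04}: the Poincar\'e-series function $\mathcal P^\a\Delta_{t_0}$.

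First I would adopt the normalization $\D_{t_0}(t_0)=1$ discussed after Remark \ref{230623-01} and set $g=\mathcal P^\a\Delta_{t_0}$. By the defining (first) form of the Poincar\'e series, $g$ is $\a$-automorphic, and by \eqref{230517-01} it satisfies $|g|\le 1$ on $\bbD$. By \eqref{230517-02}, with $\D_{t_0}(t_0)=1$, it has nontangential value $1$ at $t_0$, and the finiteness of the first integral in \eqref{230517-03} shows that $(g-1)/(t-t_0)\in H^2$. Hence $g$ is a legitimate competitor in the minimization that defines $k^\a_{t_0}$.

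The core of the argument is then identity \eqref{230517-03} itself, which for this $g$ reads
$$
\int\limits_{\bbT}\frac{|g-1|^2}{|t-t_0|^2}\mu(dt)
+\int\limits_{\bbT}\frac{1-|g|^2}{|t-t_0|^2}\mu(dt)
=t_0\frac{\D'_{t_0}(t_0)}{\D_{t_0}(t_0)}.
$$
Because $|g|\le 1$ almost everywhere on $\bbT$, the second integrand is nonnegative, so the first integral is bounded above by the right-hand side. Combining this with the minimality of $k^\a_{t_0}$ gives the chain $\int_{\bbT}|k^\a_{t_0}-1|^2/|t-t_0|^2\,\mu(dt)\le\int_{\bbT}|g-1|^2/|t-t_0|^2\,\mu(dt)\le t_0\D'_{t_0}(t_0)/\D_{t_0}(t_0)$, which is exactly \eqref{230517-05}.

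I do not expect a serious obstacle: the one-sided inequality comes essentially for free once $\mathcal P^\a\Delta_{t_0}$ is recognized as an admissible competitor. The only delicate points are bookkeeping ones --- confirming that $\mathcal P^\a\Delta_{t_0}$ carries character exactly $\a$ (and not $\a$ twisted by $\d_{t_0}$) and that the normalization $\D_{t_0}(t_0)=1$ really forces $g(t_0)=1$, so that $g$ lies in the class $1+(t-t_0)H^2$ over which $k^\a_{t_0}$ is extremal. The nonnegativity of the ``defect'' integral $\int_{\bbT}(1-|g|^2)/|t-t_0|^2\,\mu(dt)$ is precisely what converts the equality \eqref{230517-03} into the desired upper bound; this same term is what one would need to control from below if one ever sought a matching lower estimate.
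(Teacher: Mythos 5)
Your proposal is correct and follows essentially the same route as the paper: the paper likewise takes $\mathcal P^\a\Delta_{t_0}$ (multiplied by the unimodular constant $\overline{\D_{t_0}(t_0)}$, which is just your normalization $\D_{t_0}(t_0)=1$ in disguise) as an admissible competitor, drops the nonnegative defect integral in \eqref{230517-03} to get the upper bound, and concludes by the extremal property of $k^\a_{t_0}$.
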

\begin{proof}
It was observed in Corollary \ref{230517-04} that formula \eqref{230517-03}
yields
$$
\dfrac{(\mathcal P^\alpha \Delta_{t_0})(t)\overline{\D_{t_0}(t_0)}-1}{t-t_0}\in H^2.
$$
Moreover, it follows from formula \eqref{230517-03} that
$$
\int\limits_{\bbT}\dfrac{\left|\cP^\a\left(\D_{t_0}\overline{\D_{t_0}(t_0)}\right)-1\right|^2}{|t-t_0|^2}\mu(dt)
\le t_0\dfrac{\D_{t_0}'(t_0)}{\D_{t_0}(t_0)}.
$$
Then, by the extremal property of $k_{t_0}^\a$,
\be\label{221231_05}
\int\limits_{\bbT}\dfrac{|k_{t_0}^\a-1|^2}{|t-t_0|^2}\mu(dt)
\le
\int\limits_{\bbT}\dfrac{\left|\cP^\a\left(\D_{t_0}\overline{\D_{t_0}(t_0)}\right)-1\right|^2}{|t-t_0|^2}\mu(dt)
\le t_0\dfrac{\D_{t_0}'(t_0)}{\D_{t_0}(t_0)}.
\ee
\end{proof}
To guarantee that bound \eqref{230517-05} is attained for some character $\a$ one needs an additional assumption on the group $\G$
called DCT (Direct Cauchy Theorem) property. This concept apparently was introduced in \cite{Hasu}.
It was extensively used and discussed in \cite{SY} and subsequent works.
The property is related to the validity of the Cauchy Integral Theorem/Formula when integrating
functions of a certain class over the boundary of the surface $\bbD/\G$.
We do not give here a detailed motivation of the property. We just state it in the form we need it.
\begin{definition}
We will say that group $\G$ has $t_0$ - DCT property if
\be\label{230623-06}
\int\limits_{\mathbb T}\overline{\left({\D_{t_0}(t)}\overline{\D_{t_0}(t_0)}-1\right)}f(t)\dfrac{\mu(dt)}{|t-t_0|^2} =0
\ee
for every $\d_{t_0}$ automorphic $f$ such that $\dfrac{f}{t-t_0}\in H^2$, where $\d_{t_0}$ is the character of $\D_{t_0}$.
\end{definition}
In view of Theorem \ref{211115_01}, this DCT property can be read as
$$
k^{\d_{t_0}}_{t_0}=\D_{t_0}(t)\overline{\D_{t_0}(t_0)}.
$$
The later implies that bound \eqref{230517-05} is attained for character $\d_{t_0}$ and
\be\label{230623-11}
\underset{\a\in\G^*}{\sup}
\int\limits_{\bbT}\dfrac{|k^{\a}_{t_0}-1|^2}{|t-t_0|^2}\mu(dt)
=
t_0\dfrac{\D_{t_0}'(t_0)}{\D_{t_0}(t_0)}.
\ee
Now Remark \ref{230623-01} can be completed as follows
\begin{remark}\label{230623-08}
Let $|t_0|=1$. We assume that assumptions \eqref{230312-03}, \eqref{221231_04} of Theorem \ref{221231_01}
and assumption \eqref{230623-06} hold true. Then
for every character $\a$ there exists an $\a$ automorphic analytic on $\bbD$ function $w(\z)$,
$|w(\z)|\le 1$, such that
\be\label{230623-10}
w(\z)=w_0+w'_0(\z-t_0)+o(\z-t_0)
\ee
as $\z$ approaches $t_0$ nontangentially {\bf if and only if}
\be\label{230623-09}
t_0\dfrac{w'_0}{w_0}\ge
t_0\dfrac{\D_{t_0}'(t_0)}{\D_{t_0}(t_0)}.
\ee
\end{remark}
\begin{proof}
Necessity of \eqref{230623-09} follows from \eqref{210915_02'''} and
\eqref{230623-11}. Sufficiency was proved in Remark \ref{230623-01}.
\end{proof}

\section{Appendix}
\begin{lemma}\label{230605-01}
The derivative of the Martin function also equals
$$
m'_{t_0}(\z)=it_0\sum\limits_{\gamma\in\Gamma}
\dfrac{\gamma'(\z)}{(\gamma(\z)-t_0)^2}.
$$
\end{lemma}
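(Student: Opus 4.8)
The plan is to start from the representation \eqref{211113_03} already recorded for the derivative, namely
$$
m'_{t_0}(\z)=it_0\sum_{\g\in\G}\frac{\g'(t_0)}{(\g(t_0)-\z)^2},
$$
and to transform each summand by a pointwise M\"obius identity, after which relabeling the group by $\g\mapsto\g^{-1}$ produces the claimed second form. Before touching the series I would record absolute convergence for fixed $\z\in\bbD$: since $|t_0|=1$, every orbit point $\g(t_0)$ lies on $\bbT$, so $|\g(t_0)-\z|\ge 1-|\z|$, and therefore
$$
\sum_{\g\in\G}\frac{|\g'(t_0)|}{|\g(t_0)-\z|^2}\le\frac{1}{(1-|\z|)^2}\sum_{\g\in\G}|\g'(t_0)|<\infty
$$
by the standing hypothesis \eqref{BK-2019-11-03-03}. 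This absolute convergence is what legitimizes the termwise transformation and the subsequent rearrangement.

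The key step is the pointwise identity
$$
\frac{\g'(t_0)}{(\g(t_0)-\z)^2}=\frac{(\g^{-1})'(\z)}{(\g^{-1}(\z)-t_0)^2},\qquad\g\in\G.
$$
I would prove it from the standard formulas for a disk automorphism $\g(\z)=\frac{a\z+b}{\overline b\z+\overline a}$, $|a|^2-|b|^2=1$. Writing $c=\overline b$, $d=\overline a$, so that $ad-bc=1$, one has $\g'(\z)=(c\z+d)^{-2}$ and $(\g^{-1})'(w)=(a-cw)^{-2}$, together with the difference formula $\g(p)-\g(s)=\frac{p-s}{(cp+d)(cs+d)}$. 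Taking $s=\g^{-1}(\z)$ and using $c\,\g^{-1}(\z)+d=(a-c\z)^{-1}$ yields $\g(t_0)-\z=\frac{(t_0-\g^{-1}(\z))(a-c\z)}{ct_0+d}$; squaring and dividing $\g'(t_0)=(ct_0+d)^{-2}$ by it collapses everything to $\frac{1}{(t_0-\g^{-1}(\z))^2(a-c\z)^2}$, which is exactly the right-hand side because $(a-c\z)^{-2}=(\g^{-1})'(\z)$ and $(t_0-\g^{-1}(\z))^2=(\g^{-1}(\z)-t_0)^2$. This is a one-line consequence of the difference formula rather than a brute-force expansion.

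Substituting this identity into every term of \eqref{211113_03} gives
$$
m'_{t_0}(\z)=it_0\sum_{\g\in\G}\frac{(\g^{-1})'(\z)}{(\g^{-1}(\z)-t_0)^2},
$$
and since $\g\mapsto\g^{-1}$ is a bijection of $\G$, and the absolute values of corresponding terms on the two sides coincide (so absolute convergence is preserved), I may relabel the summation index to obtain
$$
m'_{t_0}(\z)=it_0\sum_{\g\in\G}\frac{\g'(\z)}{(\g(\z)-t_0)^2},
$$
which is the assertion. I do not expect a genuine obstacle: the content is the elementary M\"obius identity plus a rearrangement. The only point requiring care is the convergence and rearrangement bookkeeping, and that is disposed of once and for all by the bound in the first paragraph.
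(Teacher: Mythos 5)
Your proof is correct and takes essentially the same route as the paper: the paper's proof establishes the identical termwise M\"obius identity (in the equivalent form $\frac{\g'(\z)}{(\g(\z)-t_0)^2}=\frac{(\g^{-1})'(t_0)}{(\z-\g^{-1}(t_0))^2}$, which is yours with $\g$ replaced by $\g^{-1}$) by direct expansion using $\g(\z)=\frac{a\z+b}{\overline b\z+\overline a}$, $|a|^2-|b|^2=1$, and then reindexes the sum over the bijection $\g\mapsto\g^{-1}$. Your derivation of the identity via the difference formula $\g(p)-\g(s)=\frac{p-s}{(cp+d)(cs+d)}$ and your explicit absolute-convergence bound from \eqref{BK-2019-11-03-03} (which the paper leaves implicit) are only cosmetic refinements of the same argument.
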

\begin{proof}
Let,
$$
\g(\z)=\dfrac{a\z+b}{\overline b \z +\overline a},
$$
where $|a|^2-|b|^2=1$. Then
\be\label{20230202_01}
\g'(\z)=\dfrac{1}{(\overline b \z +\overline a)^2}
\ee
and
$$
\dfrac{\gamma'(\z)}{(\gamma(\z)-t_0)^2}
$$
$$
=\dfrac{1}{(\overline b \z +\overline a)^2\left[\dfrac{a\z+b}{\overline b \z +\overline a}-t_0\right]^2}
=\dfrac{1}{[(a\z+b)-t_0(\overline b \z+\overline a)]^2}
$$
$$
=\dfrac{1}{[(-\overline b t_0+a)\z-(\overline a t_0-b)]^2}
=\dfrac{1}{(-\overline b t_0+a)^2\left[\z-\dfrac{\ \overline a t_0-b}{-\overline b t_0+a}\right]^2}
$$
$$
=\dfrac{(\gamma^{-1})'(t_0)}{(\z-\gamma^{-1}(t_0))^2},
$$
since
$$
\g^{-1}(\z)=\dfrac{\ \overline a \z-b}{-\overline b \z +a}
$$
(the inverse matrix).
\end{proof}
\begin{theorem}[Carath\'eodory--Julia, \cite{Car}]\label{ACJ-03sep01}
Let function $w$ be analytic in the unit disk and bounded in modulus by $1$.
Let $t$ be a point on the unit circle. The following are equivalent:
\begin{eqnarray}
&&(1) \quad d_1:={\displaystyle\liminf_{\zeta\to
t}\frac{1-|w(\zeta)|^2}{1-|\zeta|^2}}<\infty\quad (|\zeta|<1, \zeta {\text\ approaches\ } t
{\text\ in\ an\ arbitrary\ way });\nonumber \\
&&(2) \quad
d_2:={\displaystyle\lim_{\zeta\to t}
\frac{1-|w(\zeta)|^2}{1-|\zeta|^2}}<\infty
\quad (\zeta {\text\ approaches\ } t
{\text\ nontangentially });\nonumber\\
&&(3)\quad \mbox{Finite nontangential limits} \; \;  \nonumber\\
&&\qquad\qquad w(t):={\displaystyle\lim_{\zeta\to
t}w(\zeta)} \; \; \mbox{and} \; \;
d_3:={\displaystyle\lim_{\zeta\to
t}\frac{1-w(\zeta)\overline{w(t)}}{1-\zeta\bar{t}}}\nonumber\\
&&\qquad\ \mbox{exist},\ |w(t)|=1.\nonumber\\
&&(4)\quad \mbox{Finite nontangential limits} \; \;  \nonumber\\
&&\qquad\qquad w(t):={\displaystyle\lim_{\zeta\to
t}w(\zeta)} \; \; \mbox{and} \; \;
w'(t)={\displaystyle\lim_{\zeta\to
t}\frac{w(\zeta)-w(t)}{\zeta-t}}\nonumber\\
&&\qquad\ \mbox{exist}, |w(t)|=1.\ w'(t)\ \text{is called the angular derivative at}\ t.\nonumber\\
&&(5)\quad \mbox{Finite nontangential limits} \; \; \nonumber\\
&&\qquad\qquad w(t):={\displaystyle\lim_{\zeta\to
t}w(\zeta)} \; \; \mbox{and} \; \; w'_0:={\displaystyle\lim_{\zeta\to
t}w^\prime(\zeta)}\nonumber\\
&&\qquad\ \mbox{exist},\ |w(t)|=1.\nonumber\\
&&(6)\quad \mbox{There exist a constant $w_0$, $|w_0|=1$ and a constant $d\ge 0$
} \; \; \nonumber\\
&&\qquad \mbox{\ such that the boundary Schwarz-Pick inequality holds}\nonumber\\
&&\qquad\qquad
\left|
\frac{w(\zeta)-w_0}{\zeta-t}
\right|^2\le d\cdot \frac{1-|w(\zeta)|^2}{1-|\zeta|^2},\quad |\zeta|<1;
\label{ACJ-28oct01}\\
&&\qquad \mbox{inequality \eqref{ACJ-28oct01} implies that the following nontangential limit} \; \; \nonumber\\
&&\qquad\qquad w(t):={\displaystyle\lim_{\zeta\to
t}w(\zeta)} \; \; \mbox{exists\ and} \; \; w(t)=w_0;\nonumber\\
&&\qquad \mbox{\ we denote the smallest constant $d$ that works for \eqref{ACJ-28oct01}  by $d_4$.}\nonumber
\end{eqnarray}
When these conditions hold, we have $w'_0=w'(t)$ and
$$d_1=d_2=d_3=d_4=t\frac{w'(t)}{w(t)}=|w'(t)|.$$
This number is equal to $0$ if and only if $w$ is a unimodular constant.
\end{theorem}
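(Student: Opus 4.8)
The plan is to exhibit a manifestly nonnegative Gram-type integral whose value is precisely $t_0\dfrac{w'_0}{w_0}$ plus the difference of the two kernel norms, so that nonnegativity immediately delivers \eqref{210915_02}. Since $|w(t)|\le 1$ for almost every $t\in\bbT$, the pointwise matrix $\begin{bmatrix}1 & w\\ \overline w & 1\end{bmatrix}$ is positive semidefinite; feeding it the vector with entries $k^{\a\b}_{t_0}$ and $-\overline{w_0}\,k^\a_{t_0}$ and integrating against $\dfrac{\mu(dt)}{|t-t_0|^2}$ produces a quantity $Q\ge 0$. Using $|w_0|=1$ to simplify the $|w_0|^2|k^\a_{t_0}|^2$ entry, expanding $Q$ yields four terms: $|k^{\a\b}_{t_0}|^2$, $|k^\a_{t_0}|^2$, and the two conjugate cross terms built from $w\overline{w_0}\,k^\a_{t_0}$ paired against $k^{\a\b}_{t_0}$.

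The heart of the argument is to collapse the cross terms using the reproducing structure of the kernels. Because $w$ is $\b$-automorphic while $k^\a_{t_0}$ is $\a$-automorphic, the function $f:=k^{\a\b}_{t_0}-w\overline{w_0}\,k^\a_{t_0}$ is $\a\b$-automorphic, and $f$ vanishes at $t_0$ since $k^{\a\b}_{t_0}(t_0)=k^\a_{t_0}(t_0)=1$ and $w\overline{w_0}\to 1$. The decomposition
\[
f=(k^{\a\b}_{t_0}-1)+(1-k^\a_{t_0})+(1-w\overline{w_0})(k^\a_{t_0}-1)+(1-w\overline{w_0})
\]
shows, using \eqref{230606-02} together with the crude bound $|1-w\overline{w_0}|\le 2$ to keep the product term under control, that $\dfrac{f}{t-t_0}\in H^2$. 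Theorem \ref{211115_01} (and its complex conjugate) then forces $\displaystyle\int_\bbT\dfrac{(\overline{k^{\a\b}_{t_0}}-1)f}{|t-t_0|^2}\mu(dt)=0$, which permits replacing the trailing factor $k^{\a\b}_{t_0}$ by the constant $1$ in both cross terms. After a purely algebraic regrouping that produces the combinations $|k^\a_{t_0}-1|^2$ and $|k^{\a\b}_{t_0}-1|^2$, the leftover kernel corrections all have the shape $(1-w\overline{w_0})(k^\a_{t_0}-1)$; writing $\dfrac{1}{|t-t_0|^2}=-\dfrac{t t_0}{(t-t_0)^2}$ and noting that $\dfrac{(1-w\overline{w_0})(k^\a_{t_0}-1)}{(t-t_0)^2}$ is a product of two $H^2$ functions, hence an element of $H^1$, I would conclude that these integrate to zero because they reduce to the $(-1)$st Fourier coefficient of an $H^1$ function.

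What survives is
\[
Q=\int_\bbT\dfrac{(1-w_0\overline w)+(1-w\overline{w_0})}{|t-t_0|^2}\mu(dt)+\int_\bbT\dfrac{|k^\a_{t_0}-1|^2}{|t-t_0|^2}\mu(dt)-\int_\bbT\dfrac{|k^{\a\b}_{t_0}-1|^2}{|t-t_0|^2}\mu(dt).
\]
Since $|w_0|=1$, the first integrand equals $\dfrac{1-|w|^2}{|t-t_0|^2}+\dfrac{|w-w_0|^2}{|t-t_0|^2}$, and the Carath\'eodory--Julia theorem (Theorem \ref{ACJ-03sep01}) identifies the corresponding integral as $t_0\dfrac{w'_0}{w_0}$. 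Hence $Q=t_0\dfrac{w'_0}{w_0}+\displaystyle\int_\bbT\dfrac{|k^\a_{t_0}-1|^2}{|t-t_0|^2}\mu(dt)-\int_\bbT\dfrac{|k^{\a\b}_{t_0}-1|^2}{|t-t_0|^2}\mu(dt)$, and $Q\ge 0$ is exactly \eqref{210915_02}. I expect the main obstacle to be the bookkeeping in the middle step: one must verify carefully which regrouped pieces are $\a\b$-automorphic with quotient by $t-t_0$ genuinely in $H^2$ (so that the orthogonality of Theorem \ref{211115_01} is applicable), separating them cleanly from the correction terms that land only in $H^1$ and must instead be annihilated by the Hardy-space mean-value argument. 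The boundedness of $w$ is precisely what rescues the product term into $H^2$ at the decisive point.
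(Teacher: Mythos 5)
You have not proved the statement you were asked to prove. The statement is the classical Carath\'eodory--Julia theorem (Theorem \ref{ACJ-03sep01}): the equivalence of the six conditions (1)--(6) --- finiteness of the liminf of $\frac{1-|w(\zeta)|^2}{1-|\zeta|^2}$, existence of the nontangential limits $w(t)$, $d_3$, the angular derivative $w'(t)$, the limit of $w'(\zeta)$, and the boundary Schwarz--Pick inequality \eqref{ACJ-28oct01} --- together with the identities $d_1=d_2=d_3=d_4=t\,w'(t)/w(t)=|w'(t)|$ and the characterization of when this number vanishes. Your proposal addresses none of these equivalences. What you have written out is instead a proof of Theorem \ref{230606-07}, the paper's automorphic necessary condition \eqref{210915_02}: you build the positive-semidefinite Gram integral from the vector $\bigl(k^{\a\b}_{t_0},\,-\overline{w_0}k^{\a}_{t_0}\bigr)$, decompose $k^{\a\b}_{t_0}-w\overline{w_0}k^{\a}_{t_0}$ exactly as the paper does, kill the cross terms via the orthogonality of Theorem \ref{211115_01} and the $H^1$ mean-value argument for $\frac{(1-w\overline{w_0})(k^{\a}_{t_0}-1)}{(t-t_0)^2}$, and identify the surviving piece via \eqref{230525-01}. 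As a proof of Theorem \ref{230606-07} this is essentially the paper's own argument, but as a proof of Theorem \ref{ACJ-03sep01} it is worse than incomplete: it is circular, since at the decisive step you explicitly invoke ``the Carath\'eodory--Julia theorem (Theorem \ref{ACJ-03sep01})'' to identify $\int_{\mathbb T}\frac{|w-w_0|^2}{|t-t_0|^2}\mu(dt)+\int_{\mathbb T}\frac{1-|w|^2}{|t-t_0|^2}\mu(dt)$ with $t_0\frac{w'_0}{w_0}$. The statement in question is thus an \emph{input} to your argument, not its conclusion.

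Note also that the paper itself does not prove Theorem \ref{ACJ-03sep01}; it is quoted from Carath\'eodory \cite{Car} (with \cite{BoKh} for a modern exposition), so a genuine proof attempt would have to supply the classical chain of implications --- e.g.\ $(1)\Rightarrow(6)$ via the Schwarz--Pick inequality along a minimizing sequence, $(6)\Rightarrow(3),(4)$ by extracting the nontangential limits from \eqref{ACJ-28oct01}, the Julia-lemma computation giving $d_1=d_2=d_3=d_4=t\,w'(t)/w(t)$, and the equivalence with condition $(5)$ --- none of which appears in your proposal. If your goal was Theorem \ref{230606-07}, your write-up is fine and matches the paper; for the assigned statement, the entire content is missing.
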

Under conditions of the Carath\'eodory-Julia Theorem the following formula holds
\be\label{230525-01}
\int\limits_{\bbT}\dfrac{|w(\tau)-w(t)|^2}{|\tau -t|^2}\mu(d\tau)
+
\int\limits_{\bbT}\dfrac{1-|w(\tau)|^2}{|\tau-t|^2}\mu(d\tau)
=t\dfrac{w'(t)}{w(t)}.
\ee
In particular,
\be\label{230606-01}
\dfrac{w(\cdot)-w(t)}{\cdot -t}\in H^2.
\ee
For a modern exposition of the Carath\'eodory-Julia Theorem see, e. g., \cite{BoKh} and further references there, also see
\cite{Kh-Yud-2019}.
\begin{theorem}[Frostman, \cite{Fro}]\label{ACJ-19oct01}
Let $w$, $w_n$ be analytic functions bounded in modulus by $1$ on the unit disk.
Assume that $|w_{n+1}(\zeta)|\le|w_n(\zeta)|$ for every $|\zeta|<1$ and every $n$.
Assume that $w_n(\zeta)$ converges to $w(\zeta)$ for every $|\zeta|<1$.
Let $|t|=1$. Assume that nontangential boundary values $w_n(t)$,
$w'_n(t)$ exist and that $|w_n(t)|=1$, $w'_n(t)$
are finite. Then nontangential boundary values $w(t)$,
$w'(t)$ exist and  $|w(t)|=1$, $w'(t)$ is finite if and only if
$|w_n'(t)|$ are bounded above. In this case
$$
w(t)=\lim w_n(t),\quad w'(t)=\lim w_n'(t).
$$
\end{theorem}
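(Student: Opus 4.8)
This is the classical theorem of Frostman \cite{Fro}; the plan is to prove it using only the Carath\'eodory--Julia theorem (Theorem \ref{ACJ-03sep01}) and, optionally, identity \eqref{230525-01}, both available above. The backbone of the argument is the observation that the hypotheses force the numbers $|w_n'(t)|$ to be monotone, after which each direction of the equivalence is short.

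First I would record the monotonicity. Since $|w_{n+1}(\z)|\le|w_n(\z)|$ on $\bbD$, we have $1-|w_{n+1}(\z)|^2\ge 1-|w_n(\z)|^2\ge0$; dividing by $1-|\z|^2$ and letting $\z\to t$ nontangentially (these limits exist and equal $|w_n'(t)|$, $|w_{n+1}'(t)|$ by condition (2) of Theorem \ref{ACJ-03sep01}) yields $|w_n'(t)|\le|w_{n+1}'(t)|$. Hence $\{|w_n'(t)|\}$ is nondecreasing and is bounded if and only if it converges to a finite $L:=\lim_n|w_n'(t)|$. The limit $w$ is analytic with $|w|\le1$ (a locally uniform limit of the $w_n$) and satisfies $|w(\z)|\le|w_n(\z)|$, so the same comparison run against $w$ gives $|w_n'(t)|\le|w'(t)|$ for all $n$ \emph{whenever} $w$ has a finite angular derivative at $t$. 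This already settles the ``only if'' direction: if $w(t),w'(t)$ exist with $|w(t)|=1$ and $w'(t)$ finite, then $L\le|w'(t)|<\infty$.

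For the ``if'' direction I would assume $L<\infty$ (the case of $w$ a unimodular constant being immediate) and use condition (6) of Theorem \ref{ACJ-03sep01}, the boundary Schwarz--Pick inequality, which for $w_n$ with smallest constant $d_4=|w_n'(t)|$ and boundary value $a_n:=w_n(t)$, $|a_n|=1$, reads
$$
\left|\frac{w_n(\z)-a_n}{\z-t}\right|^2\le|w_n'(t)|\,\frac{1-|w_n(\z)|^2}{1-|\z|^2}\le L\,\frac{1-|w_n(\z)|^2}{1-|\z|^2},\qquad|\z|<1.
$$
Bounding $1-|w_n(\z)|^2\le1$ and taking $\z=rt$ gives $|w_n(rt)-a_n|\le\sqrt{L}\,\sqrt{(1-r)/(1+r)}$, a bound \emph{uniform in} $n$ that vanishes as $r\to1$; combined with $w_n(rt)\to w(rt)$ this forces $\{a_n\}$ to be Cauchy, with limit $a:=\lim_n a_n$, $|a|=1$. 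Fixing $\z$ and letting $n\to\infty$ in the displayed inequality (the left side tends to $|(w(\z)-a)/(\z-t)|^2$, the right side to $L(1-|w(\z)|^2)/(1-|\z|^2)$) produces
$$
\left|\frac{w(\z)-a}{\z-t}\right|^2\le L\,\frac{1-|w(\z)|^2}{1-|\z|^2},\qquad|\z|<1,
$$
which is condition (6) of Theorem \ref{ACJ-03sep01} for $w$ with $w_0=a$, $d=L$. Therefore $w$ has nontangential limit $w(t)=a$, $|w(t)|=1$, finite angular derivative, and $|w'(t)|\le L$; together with $|w'(t)|\ge L$ from the monotonicity step this gives $|w'(t)|=L=\lim_n|w_n'(t)|$. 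Finally, since $t\,w_n'(t)/w_n(t)=|w_n'(t)|$ we have $w_n'(t)=|w_n'(t)|\,a_n/t\to L\,a/t=w'(t)$, so also $w(t)=\lim_n w_n(t)$ and $w'(t)=\lim_n w_n'(t)$.

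The main obstacle is the ``if'' direction, and inside it the convergence $a_n\to a$ to a unimodular limit: the crude estimate on $|w_n(\z)-a_n|$ only decays like $\sqrt{1-r}$ along the radius, so one must exploit both the vanishing of the horocyclic factor $|t-rt|^2/(1-r^2)=(1-r)/(1+r)$ and the fact that the single constant $L$ makes the estimate uniform in $n$. A parallel route would instead integrate identity \eqref{230525-01} for each $w_n$ and pass to the limit via monotone convergence in $\int_{\bbT}(1-|w_n|^2)\,|\tau-t|^{-2}\mu(d\tau)$ and Fatou in the companion term; that route meets the same delicate point, since identifying the limiting integrand again requires $a_n\to a$.
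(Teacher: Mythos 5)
Your proposal is correct and follows essentially the same route as the paper: monotonicity of $|w_n'(t)|$ from the Carath\'eodory--Julia quotient, then the boundary Schwarz--Pick inequality (condition (6) of Theorem \ref{ACJ-03sep01}) with the uniform constant $L$, passed to the limit to recover condition (6) for $w$. The only difference is cosmetic: you establish convergence of $w_n(t)$ by a direct Cauchy estimate along the radius, whereas the paper takes an arbitrary subsequential limit $w_0$ of the unimodular sequence $w_n(t)$ and uses condition (6) to identify every such limit with $w(t)$; both are valid.
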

\begin{proof}
Observe first that the sequence $|w_n'(t)|$ is increasing.
Indeed, by assumption,
$$
\frac{1-|w_n(\zeta)|^2}{1-|\zeta|^2}\le \frac{1-|w_{n+1}(\zeta)|^2}{1-|\zeta|^2}.
$$
Therefore,
$$
\lim_{\zeta\to t} \frac{1-|w_n(\zeta)|^2}{1-|\zeta|^2}\le
\lim_{\zeta\to t} \frac{1-|w(\zeta)|^2}{1-|\zeta|^2}.
$$
That is, in view of Theorem \ref{ACJ-03sep01},
$$
|w'_n(t)|\le |w'(t)|.
$$
If $|w_n'(t)|$ are bounded above, then a finite limit exists
$$
d=\lim |w_n'(t)|.
$$
We have boundary Schwarz - Pick inequalities \eqref{ACJ-28oct01}
$$
\left|
\frac{w_n(\zeta)-w_n(t)}{\zeta-t}
\right|^2\le |w_n'(t)|\cdot \frac{1-|w_n(\zeta)|^2}{1-|\zeta|^2},\quad |\zeta|<1.
$$
Let $w_0$ be any subsequential limit of $w_n(t)$ ($|w_0|=1$), then
\be\label{ACJ-2019-11-03-01}
\left|
\frac{w(\zeta)-w_0}{\zeta-t}
\right|^2\le d\cdot \frac{1-|w(\zeta)|^2}{1-|\zeta|^2},\quad |\zeta|<1.
\ee
From here we conclude that nontangential limit $w(t)$ of $w$ exists and equals $w_0$.
On the other hand this means that $w_n(t)$ has only one subsequential limit $w(t)$, therefore,
$$
\lim w_n(t)=w(t).
$$
We also conclude from \eqref{ACJ-2019-11-03-01} that a
finite nontangential limit $w'(t)$ exists and
$$
|w'(t)|\le d.
$$

Conversely, if nontangential limits $w(t)$ and $w'(t)$ exist, $|w(t)|=1$, $w'(t)$ finite, then since
$|w_n(\zeta)|\ge |w(\zeta)|$, we get that $|w_n'(t)|\le |w'(t)|$. Therefore, $|w_n'(t)|$ bounded above and
$$
d\le |w'(t)|.
$$
Thus,
$$
|w'(t)|=\lim |w'_n(t)|.
$$
Since
$$
|w'(t)|=t\frac{w'(t)}{w(t)},
$$
we also get
$$
w'(t)=\lim w'_n(t).
$$
\end{proof}
This theorem is particularly useful when applied to infinite Blaschke products.
\begin{theorem}[Frostman, \cite{Fro}]\label{230526-01}
Let $w(\zeta)$ be an infinite Blaschke product
$$
w(\zeta)=\prod\limits_{k=1}^\infty
\dfrac{\zeta-\zeta_k}{1- \zeta\overline\zeta_k}C_k
,\quad |\zeta|<1.
$$
Let $|t|=1$.
$w(\zeta)$ has a unimodular nontangential boundary value $w(t)$, $|w(t)|=1$, and
$w'(\zeta)$ has a finite nontangential boundary value $w'(t)$
if and only if
$$
\sum\limits_{k=1}^\infty\dfrac{1-|\zeta_k|^2}{|t-\zeta_k|^2}<\infty.
$$
In this case
$$
w(t)=\prod\limits_{k=1}^\infty
\dfrac{t-\zeta_k}{1- t\overline\zeta_k}C_k
$$
and
$$
|w'(t)|=
t\dfrac{w'(t)}{w(t)}=
\sum\limits_{k=1}^\infty\dfrac{1-|\zeta_k|^2}{|t-\zeta_k|^2}.
$$
\end{theorem}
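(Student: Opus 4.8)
The plan is to reduce the statement to the general Frostman theorem (Theorem \ref{ACJ-19oct01}), applied to the partial products
$$
w_n(\zeta)=\prod\limits_{k=1}^n\dfrac{\zeta-\zeta_k}{1-\zeta\overline{\zeta_k}}C_k,\qquad |\zeta|<1.
$$
First I would check the hypotheses of Theorem \ref{ACJ-19oct01}. Each $w_n$ is a finite Blaschke product: it is rational with all poles $1/\overline{\zeta_k}$ lying outside $\overline{\bbD}$, hence analytic across $\bbT$, and $|w_n(t)|=1$ for every $t\in\bbT$; in particular the nontangential (indeed ordinary) boundary values $w_n(t)$ and $w_n'(t)$ exist, are finite, and $|w_n(t)|=1$. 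Since $w_{n+1}=w_n\cdot b_{n+1}$ with $|b_{n+1}(\zeta)|\le 1$ on $\bbD$, the monotonicity $|w_{n+1}(\zeta)|\le|w_n(\zeta)|$ holds. Finally, because $\{\zeta_k\}$ satisfies the Blaschke condition (this is built into the assumption that $w$ is an infinite Blaschke product), $w_n(\zeta)\to w(\zeta)$ for every $|\zeta|<1$. Thus all the assumptions of Theorem \ref{ACJ-19oct01} are in force.

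The one genuine computation is the angular derivative of a single factor. Writing $b_k(\zeta)=C_k\dfrac{\zeta-\zeta_k}{1-\zeta\overline{\zeta_k}}$ and using $|t|=1$, so that $1-t\overline{\zeta_k}=t\overline{(t-\zeta_k)}$, I would compute
$$
t\dfrac{b_k'(t)}{b_k(t)}=\dfrac{t}{t-\zeta_k}+\dfrac{\overline{\zeta_k}}{\overline{t-\zeta_k}}=\dfrac{t\overline{(t-\zeta_k)}+\overline{\zeta_k}(t-\zeta_k)}{|t-\zeta_k|^2}=\dfrac{1-|\zeta_k|^2}{|t-\zeta_k|^2},
$$
the last equality following from $t\overline{(t-\zeta_k)}=1-t\overline{\zeta_k}$ and $\overline{\zeta_k}(t-\zeta_k)=t\overline{\zeta_k}-|\zeta_k|^2$. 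Since the logarithmic derivative is additive over products, summing gives
$$
t\dfrac{w_n'(t)}{w_n(t)}=\sum\limits_{k=1}^n\dfrac{1-|\zeta_k|^2}{|t-\zeta_k|^2},
$$
and because $|w_n(t)|=1$ the Carath\'eodory--Julia theorem (Theorem \ref{ACJ-03sep01}) gives $|w_n'(t)|=t\dfrac{w_n'(t)}{w_n(t)}$, so the right-hand side is exactly $|w_n'(t)|$.

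With this identity the theorem follows from Theorem \ref{ACJ-19oct01}. Since the summands are nonnegative, the partial sums $|w_n'(t)|$ are increasing, so the condition ``$|w_n'(t)|$ bounded above'' of that theorem is equivalent to the convergence $\sum_{k=1}^\infty\dfrac{1-|\zeta_k|^2}{|t-\zeta_k|^2}<\infty$. Theorem \ref{ACJ-19oct01} then yields simultaneously that $w(t)$ and $w'(t)$ exist with $|w(t)|=1$ and $w'(t)$ finite precisely under this condition, and that in that case $w(t)=\lim w_n(t)=\prod_{k=1}^\infty\dfrac{t-\zeta_k}{1-t\overline{\zeta_k}}C_k$ together with $|w'(t)|=\lim|w_n'(t)|=\sum_{k=1}^\infty\dfrac{1-|\zeta_k|^2}{|t-\zeta_k|^2}$; the identity $t\dfrac{w'(t)}{w(t)}=|w'(t)|$ is then inherited from Theorem \ref{ACJ-03sep01}. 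I do not expect a serious obstacle here: essentially all of the analytic difficulty has already been absorbed into Theorem \ref{ACJ-19oct01}, and the only points demanding care are the single-factor boundary computation above and the verification that the partial products are genuinely admissible inputs (unimodular boundary values, finite boundary derivatives, monotone moduli, pointwise convergence) for that theorem.
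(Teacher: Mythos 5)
Your proposal is correct and follows essentially the same route as the paper: apply Theorem \ref{ACJ-19oct01} to the partial products $w_n$, using the identity $|w_n'(t)|=t\,w_n'(t)/w_n(t)=\sum_{k=1}^n\frac{1-|\zeta_k|^2}{|t-\zeta_k|^2}$. Your version merely spells out the single-factor logarithmic-derivative computation and the hypothesis checks that the paper leaves implicit, and both are carried out correctly.
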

\begin{proof}
This theorem is
Theorem \ref{ACJ-19oct01} applied to the sequence
$$
w_n(\zeta)=\prod\limits_{k=1}^n
\dfrac{\zeta-\zeta_k}{1- \zeta\overline\zeta_k}C_k,
$$
since for $w_n$ we have
$$
w_n(t)=\prod\limits_{k=1}^n
\dfrac{t-\zeta_k}{1- t\overline\zeta_k}C_k
$$
and
$$
|w'_n(t)|=
t\dfrac{w'_n(t)}{w_n(t)}=
\sum\limits_{k=1}^n\dfrac{1-|\zeta_k|^2}{|t-\zeta_k|^2}.
$$
\end{proof}
Next theorem is a version of Theorem \ref{ACJ-03sep01} for functions with positive imaginary part.
\begin{theorem}\label{BK-2020-Jan4-03}
Let function $u$ be analytic in the unit disk with positive imaginary part.
Let $t$ be a point on the unit circle. Assume that $u_0\in\mathbb R$ $(u_0\ne\infty)$. The following are equivalent:
\begin{eqnarray}
&&(1) \quad d_1:={\displaystyle\liminf_{\zeta\to
t}\frac{4\Im u(\zeta)}{1-|\zeta|^2}}<\infty\quad (|\zeta|<1, \zeta {\text\ approaches\ } t
{\text\ in\ an\ arbitrary\ way });\nonumber \\
&&(2) \quad
d_2:={\displaystyle\lim_{\zeta\to t}
\frac{4\Im u(\zeta)}{1-|\zeta|^2}}<\infty
\quad (\zeta {\text\ approaches\ } t
{\text\ nontangentially });\nonumber\\
&&(3;4)\quad \mbox{Nontangential limits exist} \; \;  \nonumber\\
&&\qquad\qquad {\displaystyle\lim_{\zeta\to
t}u(\zeta)=u_0} \; \; \mbox{and} \; \;
u'(t):={\displaystyle\lim_{\zeta\to
t}\frac{u(\zeta)-u_0}{\zeta-t}}\ \mbox{finite}.\nonumber\\
&&(5)\quad \mbox{Nontangential limits exist} \; \; \nonumber\\
&&\qquad\qquad {\displaystyle\lim_{\zeta\to
t}u(\zeta)=u_0} \; \; \mbox{and} \; \; u'_0:={\displaystyle\lim_{\zeta\to
t}u'(\zeta)}\ \mbox{finite}.\nonumber\\
&&(6)\quad \mbox{There exists a constant $d\ge 0$
such that the boundary Schwarz-Pick inequality holds}\nonumber\\
&&\qquad\qquad
\left|\frac{u(\zeta)-u_0}{\zeta-t}\right|^2
\le
d\cdot \frac{\Im u(\zeta)}{1-|\zeta|^2},\quad |\zeta|<1;
\label{BK-2020-Jan4-04}\\
&&\qquad \mbox{\ we denote the smallest constant $d$ that works for \eqref{BK-2020-Jan4-04}
by $d_4$.}\nonumber
\end{eqnarray}
When these conditions hold, we have $u'(t)=u'_0$ and
$$d_1=d_2=d_4=2i t u'_0<\infty.$$
This number is also positive unless $u(\zeta)=u_0$ identically.
\end{theorem}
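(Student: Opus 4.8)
The plan is to reduce the statement to the already-established Schur-class Carath\'eodory--Julia theorem (Theorem \ref{ACJ-03sep01}) by means of a Cayley transform. Since $\Im u>0$ on $\bbD$, the function
$$
w(\zeta)=\frac{u(\zeta)-u_0-i}{u(\zeta)-u_0+i}
$$
is analytic and satisfies $|w(\zeta)|<1$, so it is of Schur class; a direct computation gives the two identities I will use repeatedly,
$$
1-|w(\zeta)|^2=\frac{4\,\Im u(\zeta)}{|u(\zeta)-u_0+i|^2},
\qquad
u(\zeta)=u_0+i\,\frac{1+w(\zeta)}{1-w(\zeta)}.
$$
First I would verify that each of the conditions (1)--(6) for $u$ is equivalent to the corresponding condition for $w$ at the boundary point $t$ with boundary value $w_0=-1$, and then quote Theorem \ref{ACJ-03sep01} to close the circle.

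The correspondence rests on the fact that the nontangential boundary value of $u$ equals $u_0$ exactly when $w(t)=-1$. From the first identity and $|u-u_0+i|\ge\Im u+1\ge 1$ one gets $1-|w|^2\le 4\,\Im u$, so condition (1) for $u$ implies condition (1) for $w$; Theorem \ref{ACJ-03sep01} then yields that $w$ has a unimodular nontangential limit $w(t)$, whence by the inverse identity $u$ has a nontangential limit in $\bbR\cup\{\infty\}$. The hypothesis $u_0\ne\infty$ rules out $w(t)=1$, and naming this finite real limit $u_0$ forces $w(t)=-1$. Once $u\to u_0$ is in force we have $|u-u_0+i|\to 1$, so the first identity upgrades to
$$
\frac{4\,\Im u(\zeta)}{1-|\zeta|^2}=|u(\zeta)-u_0+i|^2\,\frac{1-|w(\zeta)|^2}{1-|\zeta|^2},
$$
which shows $d_1(u)=d_1(w)$ and $d_2(u)=d_2(w)$ and matches conditions (1) and (2).

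For the remaining conditions I would use the inverse identity. Writing $1+w=w-w_0$, the difference quotient becomes
$$
\frac{u(\zeta)-u_0}{\zeta-t}=\frac{i}{1-w(\zeta)}\cdot\frac{w(\zeta)-w(t)}{\zeta-t}\;\longrightarrow\;\frac{i}{2}\,w'(t),
$$
so conditions (3;4) for $u$ correspond to condition (4) for $w$ with $u'(t)=\tfrac{i}{2}w'(t)$; differentiating the inverse identity gives $u'(\zeta)=2i\,w'(\zeta)/(1-w(\zeta))^2\to\tfrac{i}{2}w'(t)$, which handles condition (5). For condition (6), substituting $1+w=2(u-u_0)/(u-u_0+i)$ together with the first identity turns the boundary Schwarz--Pick inequality for $w$ into
$$
\frac{|u(\zeta)-u_0|^2}{|\zeta-t|^2}\le d\,\frac{\Im u(\zeta)}{1-|\zeta|^2},
$$
with the \emph{same} constant $d$, so $d_4(u)=d_4(w)$. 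Finally the numerical relations follow by combining $w(t)=-1$, the equalities $d_1(w)=d_2(w)=d_4(w)=t\,w'(t)/w(t)=|w'(t)|$ of Theorem \ref{ACJ-03sep01}, and $w'(t)=-2i\,u'_0$: indeed $t\,w'(t)/w(t)=-t\,w'(t)=2it\,u'_0$, giving $d_1=d_2=d_4=2it\,u'_0$, a quantity that is nonnegative and vanishes precisely when $w$, hence $u$, is constant.

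The step I expect to be the main obstacle is the identification $w(t)=-1$, i.e. confirming that each hypothesis genuinely realizes the prescribed value $u_0$ as the nontangential boundary value of $u$, rather than some other finite real number or $\infty$. For conditions (3;4), (5), (6) this is built into the statement, but for conditions (1) and (2) it must be extracted from the finiteness of the $\liminf$ via the Schur reduction above, and it is exactly here that the assumption $u_0\ne\infty$ is used, to exclude the degenerate case $w(t)=1$.
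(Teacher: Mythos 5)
Your proposal is correct, and it is worth noting that the paper itself gives no proof of Theorem \ref{BK-2020-Jan4-03}: it is stated in the Appendix as a known Herglotz-class version of Theorem \ref{ACJ-03sep01} (in the tradition of \cite{Car}, \cite{BoKh}, \cite{Kh-Yud-2019}), so your Cayley-transform reduction supplies exactly the argument the paper leaves implicit. Your bookkeeping is right: with $w=(u-u_0-i)/(u-u_0+i)$ the identities $1-|w|^2=4\,\Im u/|u-u_0+i|^2$ and $u=u_0+i(1+w)/(1-w)$ translate conditions (1)--(6) for $u$ into conditions (1), (2), (4), (5), (6) of Theorem \ref{ACJ-03sep01} for $w$ at $w_0=-1$; the exact cancellation of the factor $4/|u-u_0+i|^2$ on both sides of \eqref{BK-2020-Jan4-04} explains why $d_4(u)=d_4(w)$ with no change of constant (this is precisely why the theorem carries $4\,\Im u$ in (1)--(2) but plain $\Im u$ in (6)), and the chain $w'(t)=-2iu'_0$, $t\,w'(t)/w(t)=-t\,w'(t)=2it\,u'_0$ recovers the stated $d_1=d_2=d_4=2it\,u'_0$ together with the positivity statement, since $w\equiv-1$ exactly when $u\equiv u_0$.

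Two small points deserve an explicit line each. First, the identification $w(t)=-1$: as you note, from (1) or (2) Theorem \ref{ACJ-03sep01} only produces \emph{some} unimodular $w(t)$, and $u_0\neq\infty$ excludes $w(t)=1$; but since condition (1) makes no reference to $u_0$ at all, one must also adopt the convention that $u_0$ \emph{is defined as} the resulting finite real nontangential limit --- your phrase ``naming this finite real limit $u_0$'' is the right reading of the theorem and should be stated as such. Second, your equality $d_1(u)=d_1(w)$ is asserted from $|u-u_0+i|\to1$, which at that stage you know only nontangentially, whereas $d_1$ is a liminf over unrestricted approach; the gap closes in one line by the squeeze
$$
d_1(u)\le d_2(u)=d_2(w)=d_1(w)\le d_1(u),
$$
where the last inequality is your pointwise bound $1-|w|^2\le 4\,\Im u$ and the middle equalities come from Theorem \ref{ACJ-03sep01} and the nontangential convergence $|u-u_0+i|\to1$. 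With these two clarifications the proof is complete.
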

Next theorem is a version of Theorem \ref{ACJ-19oct01} for functions with
positive imaginary part.
\begin{theorem}\label{ACJ-2019-11-03-02}
Let $u(\z)$, $u_n(\z)$, $|\z|<1$ be analytic functions with positive imaginary part.
Assume that $\Im u_{n+1}(\z)\ge \Im u_n(\z)$ for every $|\z|<1$ and every $n$.
Assume that $u_n(\z)$ converges to $u(\z)$ for every $|\z|<1$.
Let $t$, $|t|=1$. Assume that nontangential boundary values $u_n(t)$,
$u'_n(t)$ exist and that $u_n(t)\in\mathbb R$, $u'_n(t)$
are finite. Under these assumptions nontangential boundary values $u(t)$,
$u'(t)$ exist and  $u(t)\in\mathbb R$, $u'(t)$ is finite
{\bf if and only if}
$u_n'(x_0)$ are bounded above. In this case
$$
u(t)=\lim u_n(t),\quad u'(t)=\lim u_n'(t).
$$
\end{theorem}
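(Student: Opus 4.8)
The plan is to transcribe the proof of Theorem \ref{ACJ-19oct01} into the Herglotz setting, using Theorem \ref{BK-2020-Jan4-03} in place of the Schur-class Carath\'eodory--Julia theorem. The role played there by the nondecreasing sequence of angular derivatives $|w_n'(t)|$ is taken here by the quantities $2it\,u_n'(t)$, which by Theorem \ref{BK-2020-Jan4-03} equal $d_1^{(n)}=\lim_{\z\to t}\frac{4\Im u_n(\z)}{1-|\z|^2}\ge 0$ and are therefore real and nonnegative.

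First I would record the monotonicity. From the hypothesis $\Im u_{n+1}(\z)\ge\Im u_n(\z)$ we get $\frac{\Im u_n(\z)}{1-|\z|^2}\le\frac{\Im u_{n+1}(\z)}{1-|\z|^2}$ for every $\z$; passing to the nontangential limit and using the identification $d_2=2it\,u'_0$ of Theorem \ref{BK-2020-Jan4-03} shows that $2it\,u_n'(t)$ is nondecreasing in $n$. This already settles the ``only if'' direction: if $u(t)$, $u'(t)$ exist with $u(t)\in\mathbb R$ and $u'(t)$ finite, then $\Im u_n\le\Im u$ gives $2it\,u_n'(t)\le 2it\,u'(t)$, so the $u_n'(t)$ are bounded above and $d:=\lim_n 2it\,u_n'(t)\le 2it\,u'(t)$.

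For the ``if'' direction, assume $u_n'(t)$ bounded above, so that the increasing sequence $2it\,u_n'(t)$ has a finite limit $d$. The main step is to pass to the limit in the boundary Schwarz--Pick inequality \eqref{BK-2020-Jan4-04} written for $u_n$ with its sharp constant $d_4^{(n)}=2it\,u_n'(t)$,
$$
\left|\frac{u_n(\z)-u_n(t)}{\z-t}\right|^2\le 2it\,u_n'(t)\cdot\frac{\Im u_n(\z)}{1-|\z|^2},\quad |\z|<1.
$$
Before doing so I must produce a subsequential limit $u_0^*$ of the real numbers $u_n(t)$; this is the one place where the Herglotz case genuinely differs from the Schur case, where $|w_n(t)|=1$ supplied the needed compactness for free. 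I would get the bound by evaluating the displayed inequality at a fixed interior point, say $\z=0$: since $u_n(0)\to u(0)$, $2it\,u_n'(t)\le d$, and $\Im u_n(0)\to\Im u(0)$, the estimate $|u_n(0)-u_n(t)|^2\le d\,\Im u_n(0)$ bounds $u_n(t)$ uniformly. Extracting a subsequence with $u_n(t)\to u_0^*$ and letting $n\to\infty$ in the displayed inequality (using $u_n(\z)\to u(\z)$, $\Im u_n(\z)\to\Im u(\z)$, and $2it\,u_n'(t)\to d$) yields
$$
\left|\frac{u(\z)-u_0^*}{\z-t}\right|^2\le d\cdot\frac{\Im u(\z)}{1-|\z|^2},\quad |\z|<1.
$$

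Finally I would invoke condition $(6)$ of Theorem \ref{BK-2020-Jan4-03}: this inequality forces the nontangential limit $u(t)=u_0^*$ to exist, with $u(t)\in\mathbb R$, $u'(t)$ finite, and $2it\,u'(t)\le d$. Since every subsequential limit of $u_n(t)$ must then equal $u(t)$, the full sequence converges and $\lim_n u_n(t)=u(t)$. Combining $2it\,u'(t)\le d$ with the reverse bound $d\le 2it\,u'(t)$ from the ``only if'' step gives $2it\,u'(t)=d=\lim_n 2it\,u_n'(t)$, and dividing by $2it$ yields $u'(t)=\lim_n u_n'(t)$, as required. The only delicate point, as noted, is the uniform boundedness of $u_n(t)$ that legitimizes the extraction of $u_0^*$; everything else is a direct translation of the Schur-class argument.
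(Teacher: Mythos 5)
Your proof is correct and takes essentially the same route as the paper: the paper's proof consists precisely of the remark that one repeats the argument of Theorem \ref{ACJ-19oct01} using Theorem \ref{BK-2020-Jan4-03}, together with the observation that in this setting inequality \eqref{BK-2020-Jan4-04} yields boundedness of $u_n(t)$. Your evaluation of the Schwarz--Pick inequality at an interior point such as $\z=0$ is exactly the implementation of that flagged boundedness step, which replaces the automatic compactness $|w_n(t)|=1$ of the Schur-class case.
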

\begin{proof}
Proof of this theorem is similar to the proof of Theorem \ref{ACJ-19oct01} and is based on Theorem \ref{BK-2020-Jan4-03}.
Note here that in the present context inequality \eqref{BK-2020-Jan4-04} yields boundedness of $u_n(t)$.
\end{proof}
This theorem is particularly useful when applied to functions with pure point Herglotz measure.
\begin{theorem}\label{230526-02}
Let
$$
u(\z)=i\sum\limits_{k=1}^\infty\dfrac{t_k+\z}{t_k-\z}c_k,\quad |\z|<1,
$$
where $|t_k|=1$, $c_k>0$. Let $|t|=1$, $t\ne t_k$.
Nontangential boundary values $u(t)$,
$u'(t)$ exist and  $u(t)\in\mathbb R$, $u'(t)$ is finite
{\bf if and only if}
\be\label{230526-03}
\sum\limits_{k=1}^\infty\dfrac{c_k}{|t_k-t|^2}<\infty.
\ee
In this case
\be\label{230526-04}
u(t)=i\sum\limits_{k=1}^\infty\dfrac{t_k+t}{t_k-t}c_k
\ee
and
\be\label{230526-05}
u'(t)=2i\sum\limits_{k=1}^\infty\dfrac{t_k}{(t_k-t)^2}c_k
=-2i\overline{t}\sum\limits_{k=1}^\infty\dfrac{c_k}{|t_k-t|^2}.
\ee
\end{theorem}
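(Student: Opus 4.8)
The plan is to reduce the statement to Theorem \ref{ACJ-2019-11-03-02} by applying it to the partial sums, exactly in the way Theorem \ref{230526-01} was deduced from Theorem \ref{ACJ-19oct01}. I set
$$
u_n(\z)=i\sum_{k=1}^n\frac{t_k+\z}{t_k-\z}c_k,\qquad |\z|<1.
$$
Each elementary term $i\,\frac{t_k+\z}{t_k-\z}$ maps $\bbD$ into the upper half plane, since $\frac{t_k+\z}{t_k-\z}=\frac{1+\overline{t_k}\z}{1-\overline{t_k}\z}$ has positive real part; as $c_k>0$, every $u_n$ has positive imaginary part and $\Im u_{n+1}(\z)\ge\Im u_n(\z)$ for all $\z$. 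Evaluating the defining series at $\z=0$ shows that its convergence forces $\sum_k c_k<\infty$, and then $u_n(\z)\to u(\z)$ throughout $\bbD$ (uniformly on compacts). Thus the monotonicity and convergence hypotheses of Theorem \ref{ACJ-2019-11-03-02} hold.

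Next I verify the boundary hypotheses at $t$. Because $t\ne t_k$ for every $k$, each $u_n$ is a finite sum of functions analytic near $t$, so $u_n$ continues analytically across $t$ and its nontangential limits $u_n(t)$, $u_n'(t)$ are the values of that continuation. A conjugation computation using $|t_k|=|t|=1$ gives $\overline{\,i\,\frac{t_k+t}{t_k-t}\,}=i\,\frac{t_k+t}{t_k-t}$, hence
$$
u_n(t)=i\sum_{k=1}^n\frac{t_k+t}{t_k-t}c_k\in\bbR .
$$
Differentiating term by term via $\frac{d}{d\z}\frac{t_k+\z}{t_k-\z}=\frac{2t_k}{(t_k-\z)^2}$ yields
$$
u_n'(t)=2i\sum_{k=1}^n\frac{t_k}{(t_k-t)^2}\,c_k=-2i\,\overline t\sum_{k=1}^n\frac{c_k}{|t_k-t|^2},
$$
where the last equality uses $|t_k-t|^2=(t_k-t)(\overline{t_k}-\overline t)=-\frac{(t_k-t)^2}{t_k t}$, so that $\frac{t_k}{(t_k-t)^2}=-\frac{\overline t}{|t_k-t|^2}$. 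In particular $|u_n'(t)|=2\sum_{k=1}^n\frac{c_k}{|t_k-t|^2}$, an increasing sequence.

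With these ingredients Theorem \ref{ACJ-2019-11-03-02} applies verbatim: the nontangential limits $u(t)$, $u'(t)$ exist with $u(t)\in\bbR$ and $u'(t)$ finite if and only if the numbers $|u_n'(t)|$ stay bounded, which by the last display is exactly condition \eqref{230526-03}; and in that case the theorem gives $u(t)=\lim_n u_n(t)$ and $u'(t)=\lim_n u_n'(t)$, i.e.\ precisely \eqref{230526-04} and \eqref{230526-05}. I anticipate no real obstacle, as this is the Herglotz-function mirror of the Blaschke-product proof; the only points needing care are the reality of $u_n(t)$ and the two algebraic identities above, both forced by $|t_k|=|t|=1$, together with the tacit requirement $\sum_k c_k<\infty$ that makes $u$ well defined and secures $u_n\to u$.
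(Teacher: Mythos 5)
Your proof is correct and follows essentially the same route as the paper: form the partial sums $u_n$, compute $u_n(t)$ and $u_n'(t)=-2i\overline t\sum_{k=1}^n c_k/|t_k-t|^2$ so that boundedness of $|u_n'(t)|$ is exactly condition \eqref{230526-03}, and invoke Theorem \ref{ACJ-2019-11-03-02}. The extra details you supply (reality of $u_n(t)$, the identity $t_k/(t_k-t)^2=-\overline t/|t_k-t|^2$, and the tacit requirement $\sum_k c_k<\infty$) are all correct and only make explicit what the paper leaves implicit.
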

\begin{proof}
Consider
$$
u_n(\z)=i\sum\limits_{k=1}^n\dfrac{t_k+\z}{t_k-\z}c_k.
$$
Then
$$
u'_n(\z)=i\sum\limits_{k=1}^n\dfrac{2t_k}{(t_k-\z)^2}c_k.
$$
Therefore,
$$
u_n(t)=i\sum\limits_{k=1}^n\dfrac{t_k+t}{t_k-t}c_k
$$
and
$$
u'_n(t)=i\sum\limits_{k=1}^n\dfrac{2t_k}{(t_k-t)^2}c_k
=-2i\overline{t}\sum\limits_{k=1}^n\dfrac{1}{(t_k-t)(\overline{t_k}-\overline{t})}c_k
$$
$$
=-2i\overline{t}\sum\limits_{k=1}^n\dfrac{c_k}{|t_k-t|^2}.
$$
Hence $u'_n(t)$ are bounded if and only if
$$
\sum\limits_{k=1}^\infty\dfrac{c_k}{|t_k-t|^2}<\infty.
$$
Then we apply Theorem \ref{ACJ-2019-11-03-02}.
\end{proof}


\bigskip

\bigskip

A. Kheifets, Department of Mathematics and Statistics, University of Massachusetts Lowell, One University Ave.,
Lowell, MA 01854,USA

\emph{E-mail address:} {Alexander\underline{ }Kheifets@uml.edu}

 \end{document}